\definecolor{exact}{RGB}{166, 97, 26}
\definecolor{no}{RGB}{223, 194, 125}
\definecolor{old}{RGB}{128, 205, 193}
\definecolor{new}{RGB}{1, 133, 113}
\newtheorem{theorem}{Theorem}[section]
\newtheorem{proposition}{Proposition}[section]
\theoremstyle{definition}
\theoremstyle{definition}
\newtheorem{remark}[theorem]{Remark}
\newcommand{\RN}[1]{%
	\textup{\uppercase\expandafter{\romannumeral#1}}%
}
\newcommand{\bb}[1]{\mathbf{#1}}
\newcommand{\lr}[1]{\left(#1\right)}
\newcommand{\rb}[2]{\left\{#1,#2\right\}}
\newcommand{\ib}[2]{\left[#1,#2\right]}
\newcommand{\ee}{\bb{e}}
\newcommand{\uu}{\bb{U}}
\newcommand{\ut}{\bb{U}^\intercal}
\newcommand{\yy}{\bb{y}}
\newcommand{\xx}{\bb{x}}
\newcommand{\ww}{\bb{w}}
\newcommand{\Am}{\bb{A}}
\newcommand{\LL}{\bb{L}}
\newcommand{\Lh}{\hat{\bb{L}}}
\newcommand{\Lb}{\bar{\bb{L}}}
\newcommand{\MM}{\bb{M}}
\newcommand{\Mh}{\hat{\bb{M}}}
\newcommand{\Mb}{\bar{\bb{M}}}
\newcommand{\vv}{\bb{v}}
\newcommand{\am}{\bb{a}_{k_0}}
\newcommand{\ah}{\hat{\bb{a}}_{k_0}}
\newcommand{\qq}{\bb{q}}
\newcommand{\pp}{\bb{p}}
\newcommand{\sk}{\bb{s}_{k_1}}
\newcommand{\hsk}{\hat{\bb{s}}_{k_1}}
\newcommand{\xt}{\tilde{\bb{x}}}
\newcommand{\xh}{\hat{\bb{x}}}
\newcommand{\nsh}{\nabla\hat{S}}
\newcommand{\neh}{\nabla\hat{E}}
\newcommand{\xih}{\hat{\bm{\xi}}}
\newcommand{\zh}{\hat{\bm{\zeta}}}
\newcommand{\norm}[1]{\left\lVert#1\right\rVert}
\newcommand{\nn}[1]{\left|#1\right|}
\title{Energetically Consistent Model Reduction for Metriplectic Systems}
\author{Anthony Gruber$^{1,*}$}
\author{Max Gunzburger$^1$}
\author{Lili Ju$^2$}
\author{Zhu Wang$^2$}
\thanks{$^*$Corresponding author: (Anthony Gruber)  anthony.gruber.d@gmail.com}
\email{anthony.gruber.d@gmail.com, mgunzburger@fsu.edu, ju@math.sc.edu, wangzhu@math.sc.edu}
\address{$^1$ Department of Scientific Computing, Florida State University, 400 Dirac Science Library, Tallahassee, FL 32306, USA}
\address{$^2$ Department of Mathematics, University of South Carolina, 1523 Greene Street, Columbia, SC 29208, USA}
\begin{document}

\begin{abstract}
The metriplectic formalism is useful for describing complete dynamical systems which conserve energy and produce entropy.  This creates challenges for model reduction, as the elimination of high-frequency information will generally not preserve the metriplectic structure which governs long-term stability of the system.  Based on proper orthogonal decomposition, a provably convergent metriplectic  reduced-order model is formulated which is guaranteed to maintain the algebraic structure necessary for energy conservation and entropy formation.  Numerical results on benchmark problems show that the proposed method is remarkably stable, leading to improved accuracy over long time scales at a moderate increase in cost over naive methods.
\vspace{0.5pc}

\emph{Keywords:} model reduction, metriplectic dynamics, GENERIC formalism, Hamiltonian systems

\end{abstract}


\maketitle

\section{Introduction}




Metriplectic dynamical systems offer a prototypical example of how the algebraic structure internal to a system can govern the behavior of its observable quantities.  Also referred to as GENERIC systems (see \cite{ottinger2006}), metriplectic dynamics are produced through a combination of reversible and irreversible contributions whose constituent parts are a noncanonical Poisson structure and a degenerate Riemannian metric structure, respectively.  Mathematically, these structures are reflected by algebraic brackets which formally separate the dynamics into terms that are ``energy-preserving'' and terms that are ``dissipative'' (see Section \ref{sec:overview}).  Combined with appropriate compatibility (or degeneracy) conditions, this seemingly simple idea is geometrically rich and encodes a strong form of the first and second laws of thermodynamics, making it powerful enough to represent many physical systems of interest (see Section \ref{sec:rw}).  On the other hand, standard computationally efficient reduced-order models (ROMs) for these systems based on purely statistical considerations will not generally preserve the rich structure afforded by the metriplectic formalism, which can lead to unreasonable or unrealistic results in real-time use cases (see e.g. Section~\ref{sec:numerics}).  To elucidate the benefits of metriplectic structure-preservation in the context of model reduction, a genuinely metriplectic ROM based on proper orthogonal decomposition (POD) is proposed in Theorem~\ref{thm:ROM} which is shown in Theorem~\ref{thm:conv} to converge to the true solution as the reduced dimension increases.  The remainder of the manuscript is dedicated to a detailed description of this ROM along with an evaluation of its performance on benchmark examples.




\subsection{Overview}\label{sec:overview}
It is first useful to recall metriplectic systems in more detail.  The generator for metriplectic dynamics is a notion of free energy $F=E+S$ described by functions $E,S: \mathcal{P}\to\mathbb{R}$ (representing energy and entropy, respectively) which are defined on some phase space $\mathcal{P}$ that may be finite or infinite dimensional. In this case, any observable quantity $\bb{O}:\mathcal{P}\to \mathbb{R}^N$ (for some $N$) evolves as
\[ \dot{\bb{O}} = \rb{\bb{O}}{F} + \ib{\bb{O}}{F} = \rb{\bb{O}}{E} + \ib{\bb{O}}{S}, \]
with respect to a time-dependent state $\xx:[0,T]\to \mathcal{P}$, where $\rb{\cdot}{\cdot}$ is a noncanonical Poisson bracket on $\mathcal{P}$ capturing the reversible dynamics and $\ib{\cdot}{\cdot}$ is a degenerate metric bracket on $\mathcal{P}$ capturing the irreversible dynamics.  Metriplectic structure is enforced by the implicit degeneracy conditions $\rb{S}{\cdot} = \ib{E}{\cdot} = \bm{0}$, which guarantee an analogue of energy conservation and entropy production.  To describe this more precisely, recall that the Poisson structure $\rb{\cdot}{\cdot}$ is a Lie algebra realization on functions and so is bilinear and skew-symmetric (SS), while the degenerate metric structure $\ib{\cdot}{\cdot}$ is chosen to be bilinear and symmetric positive semi-definite (SPSD).  This allows for concrete expression of the reversible and irreversible brackets as
\begin{align*}
    \rb{\bb{O}}{E} &= \nabla\bb{O}\cdot\LL\nabla E, \\
    \ib{\bb{O}}{S} &= \nabla\bb{O}\cdot\MM\nabla S,
\end{align*}
where $\cdot$ represents a choice of inner product on $\mathcal{P}$, $\nabla$ denotes the gradient with respect to $\cdot$ defined through $dF(\bb{v}) = \nabla F \cdot \bb{v}$, and $\LL,\MM:\mathcal{P}\to\mathcal{P}$ are SS resp. SPSD linear operators which may depend on the state $\xx$.  Here again no distinction is made between finite and infinite dimensional systems, as this affects only the choice of inner product $\cdot$.  In many cases of interest the observable $\bb{O}\lr{\xx} = \xx$ is simply the identity, so that the system above further simplifies to the standard equations for metriplectic dynamics \cite{morrison1984,ottinger2006},
\begin{equation}\label{eq:FOM}
    \dot{\xx} = \rb{\xx}{E} + \ib{\xx}{S} = \LL\nabla E + \MM\nabla S,
\end{equation}
which in view of the compatibility conditions
\begin{equation}\label{eq:compatibility}
\LL\nabla S = \MM\nabla E = \bm{0},
\end{equation}
preserve a strong form of the first and second thermodynamical laws.  In particular, since $\LL^\intercal=-\LL$, 
\[ \dot{E} = \dot{\xx}\cdot\nabla E = \LL\nabla E\cdot \nabla E + \MM\nabla S \cdot \nabla E = \nabla S \cdot \MM\nabla E = 0, \]
so that the energy $E$ is conserved along the evolution.  Similarly, the fact that $\MM^\intercal=\MM$ is SPSD implies the relationship
\[ \dot{S} = \dot{\xx} \cdot \nabla S = \LL\nabla E \cdot \nabla S + \MM\nabla S \cdot \nabla S = -\nabla E \cdot \LL\nabla S + \MM\nabla S \cdot \nabla S = |\nabla S|^2_M \geq 0,\]
so that the entropy $S$ is nondecreasing.  Here it becomes clear that asymptotic stability is built-in to the metriplectic framework, as choosing $-S$ as a Lyapunov function shows that 
solutions to \eqref{eq:FOM} will naturally relax to the state $\nabla F = \bm{0}$.  Moreover, this gives a degree of freedom in describing a physical system with metriplectic structure, as $S$ can be chosen judiciously from the Casimirs of the Poisson bracket i.e. those functions which annihilate it.  Geometrically, it is interesting to observe that the motion of $\xx$ is everywhere tangent to the level curves of $E$ and transverse to those of $S$, which is reflective of the fact that metriplectic dynamics are a combination of Hamiltonian and generalized gradient flows.  When $\MM=\bm{0}$, $E=H$ is the Hamiltonian function, and $\LL=\bb{J}$ is a square root of $-\bb{I}$ (note the freedom in sign),  \eqref{eq:FOM} reduces to Hamilton's equations of motion $\dot{\xx} = \rb{\xx}{H} = \bb{J}\nabla H$.  Similarly, when $\LL=\bm{0}$ and $S = -G$ for some $G:\mathcal{P}\to\mathbb{R}$, \eqref{eq:FOM} reduces to a generalized gradient flow $\dot{\xx} = -\ib{\xx}{G} = -\bb{M}\nabla G$.





\subsection{Related Work}\label{sec:rw}
The metriplectic/GENERIC forms of many physical systems have already been proposed and studied theoretically for some time.  The compressible Navier-Stokes equations were seen to be metriplectic in \cite{morrison1984}, and general complex fluids were incorporated into the formalism in \cite{grmela1997}.  This paved the way for the inclusion of other physical phenomena such as those based on Korteweg-type fluids \cite{suzuki2020} and the Smoluchowski equation for colloidal suspensions \cite{wagner2001}.  Moreover, a constrained GENERIC rheological model for polymer solutions was developed in \cite{ait1999} and shown to be effective in predicting steady shear viscosity, while a formulation of dissipative magnetohydrodynamics was discovered in \cite{materassi2012} and used in studying two-dimensional incompressible plasma flow.  Beyond fluids, metriplectic structure has also been useful in describing mechanical systems such as three-dimensional rigid body dynamics \cite{materassi2018}, Hamiltonian systems with friction \cite{caligan2016}, a Vlasov-Fokker-Planck equation \cite{duong2013}, and others based on large deviation principles in physics.  

There have been far fewer works addressing the computational aspects of metriplectic systems, though some noteworthy progress has been made.  Structure preserving numerical methods for finite strain thermoelastodynamics in GENERIC form are discussed in \cite{betsch2019}, where so-called Energy-Momentum-Entropy consistent schemes are shown to increase stability of the discrete system.  A compatible discretization for GENERIC problems using finite elements in space and a monolithic integrator in time was developed in \cite{romero2010} and applied to nonlinear problems in thermoelasticity, again demonstating improved stability properties.  There is also a promising line of research into metriplectic integrators using neural network technology, which has produced works such as \cite{lee2021,zhang2021}.

From the perspective of model reduction, it has long been recognized that computational models perform better when informed by the algebraic structure of the systems that they are modeling.  This remains true for low-dimensional approximation, where the model being approximated is itself a surrogate for some physical phenomena.  This has produced an entire subfield of structure-preserving model reduction, whose goal is to design effective low-fidelity surrogates which preserves desired properties of the high-fidelity model under consideration.  The relative ubiquity and rich mathematical structure of Hamiltonian systems has inspired several works on Hamiltonian structure-preserving ROM  \cite{peng2016,gong2017,afkham2017,maboudi2019,sockwell2019}, as well as numerous extensions to the port-Hamiltonian formalism \cite{polyuga2010,beattie2011,gugercin2012,chaturantabut2016,liljegren2020} which is a useful generalization of its namesake where the Hamiltonian structure on the interior is allowed to interface with general ``ports'' on the boundary.

\begin{remark}
In fact, the system \eqref{eq:FOM} can be embedded into the port-Hamiltonian formalism,
\begin{equation*}
    \begin{split}
        \dot{\xx} &= (\bb{J}-\bb{R})\nabla H(\xx) + \bb{B}\bb{u}(t),\\
        \yy &= \bb{B}^\intercal\nabla H(\xx),
    \end{split}
\end{equation*}
where $\bb{J}$ is SS and $\bb{R}$ is SPSD. In particular, decompose $\MM = \bb{CDC}^\intercal$.  Then, choosing $\bb{R}=\bm{0}$, $\bb{J}=\bb{L}$ $\bb{B}=\bb{C}, \bb{u}=-\bb{D}\yy$, and $H=E-S$ the exegy function of the system, it follows that 
\begin{align*}
    \dot{\xx} = \LL\left(\nabla E - \nabla S\right) - \bb{CDC}^\intercal\left(\nabla E - \nabla S\right) = \LL\nabla E + \MM\nabla S,
\end{align*}
since $\LL\nabla S = \MM\nabla E = \bm{0}$.  On the other hand, none of the port-Hamiltonian ROM work known to the authors can guarantee preservation of the degeneracy conditions \eqref{eq:compatibility} necessary for metriplectic structure.
\end{remark}

Apart from Hamiltonian systems and their extensions, significant work involving structure-preserving ROM has also been done on topics such as moment-preserving Krylov subspace projection \cite{bai2005} and Lagrangian variational problems \cite{lall2003}.  An interpolatory model reduction strategy preserving symmetry, higher order structure, and state constraints is discussed in \cite{beattie2009}, and a ROM for damped wave propagation in transport networks is developed in \cite{egger2018}.  It is remarkable that the strategy in \cite{egger2018} is similar to ours in that the preservation of algebraic compatibility conditions at the reduced level assures desired properties such as conservation of mass, dissipation of energy, passivity, and existence of steady states at the full resolution.

\section{Preliminaries}\label{sec:pre}

To describe the present method for metriplectic model reduction, it is useful to review some basics of POD-based ROMs.  First, recall that the goal is to study systems which conserve some notion of energy $E$, so it is beneficial to express any approximation $\xt\approx\xx \in \mathbb{R}^N$ to the full-order state as a perturbation from some reference configuration $\xx_0 \in \mathbb{R}^N$, i.e. $\xt = \xx_0 + \uu\xh$ where $\xh\in\mathbb{R}^n$ and $\uu:\mathbb{R}^n\to\mathbb{R}^N$.  This ensures the true value of $E$ is exactly preserved at least at the point where $\xh = \bm{0}$, which serves as the initial condition for the reduced-order system.

Consider the standard POD-ROM procedure with this in mind.  Let $\xx\in\mathbb{R}^N$ be a semi-discrete object representing the solution to a system of $N\in\mathbb{N}$ ODEs, and let $\bb{Y}\in\mathbb{R}^{N\times n_t}$ be a matrix with rank $r\leq\min\{N,n_t\}$ containing snapshots of the high-fidelity solution $\bb{w} = \xx-\xx_0$ at $n_t$ discrete points in the interval $[0,T]$ where $T\in\mathbb{R}$ represents the final simulation time.  If $\bb{Y} = \tilde{\uu}\bm{\Sigma}\bb{V}^\intercal$ is the singular value decomposition, standard computations show that the matrix $\uu\in \mathbb{R}^{N\times n}$ comprised of the first $n<r$ columns of $\tilde{\uu}$ minimizes the $L^2\lr{[0,T]}$ reconstruction error of $\bb{w}$, and that this error is precisely the sum of the remaining squared singular values \cite{liang2002}.  More precisely, it follows that 
\[ \norm{\bb{w} - \uu\ut\bb{w}}^2 := \int_0^T \nn{\bb{w} - \uu\ut\bb{w}}^2\,dt = \sum_{i=n+1}^r \sigma_i^2, \]
where $\sigma_i$ is the $i^{th}$ singular value of $\bb{Y}$.  This is the basis for the standard POD-ROM procedure, which is applied to the system governing $\xx$ by making the approximation $\xt = \xx_0 + \uu\xh$ and using that $\ut\uu = \bb{I}$ in $\mathbb{R}^n$.  In the case of the metriplectic system \eqref{eq:FOM}, this yields the reduced-order model
\begin{equation}\label{eq:nospROM}
    \dot{\xh} = \ut\LL(\xt)\nabla E(\xt) + \ut\MM(\xt)\nabla S(\xt),
\end{equation}
which is the system of $n$ scalar ODEs that best approximates the FOM \eqref{eq:FOM} in the above sense, but clearly does not preserve the compatibility conditions \eqref{eq:compatibility} necessary for metriplectic structure.  As will be seen in the numerical experiments (see Section~\ref{sec:numerics}), this creates instability which can lead to unphysical blow-up of the solution in time. 


\begin{remark}
For notational convenience, dependence on the states $\xx,\xt,\xh$ is suppressed when the context is clear.  Similarly, the Einstein summation convention is adopted so that any tensor index appearing both up and down in an expression is summed over its appropriate range.
\end{remark}

As a first attempt at remedying this, it is reasonable to consider searching for mappings $\Lh,\Mh$ depending only on $\xh$ such that 
\begin{equation}\label{eq:ideal}
    \ut\LL = \Lh\ut, \qquad \ut\MM = \Mh\ut.
\end{equation}
This would convert \eqref{eq:nospROM} into the best possible ROM,
\[ \dot{\xh} = \ut\LL(\xt)\nabla E(\xt) + \ut\MM(\xt)\nabla S(\xt) = \Lh(\xh)\neh\lr{\xh} + \Mh(\xh)\nsh\lr{\xh}, \]
where we have introduced the following notation for real-valued functions $F:\mathbb{R}^N\to\mathbb{R}$,
\[ \hat{F} = F\circ\xt, \qquad \nabla\hat{F} = \xt'\cdot\nabla F = \ut\nabla F. \]
Note that the compatibility conditions \eqref{eq:compatibility} are automatically satisfied in this case, as $\Lh\nsh = \Lh\ut\nabla S = \ut\LL\nabla S = \bb{0}$ and similarly for $\Mh\neh$.  On the other hand, \eqref{eq:ideal} is an overdetermined system of equations when $N>n$, and solving the normal equations gives only the system
\begin{equation}\label{eq:oldrom}
    \dot{\xh} = \Lb\neh + \Mb\nsh, \qquad \Lb = \ut\LL\uu, \qquad \Mb = \ut\MM\uu.
\end{equation}
which has the advantage of informing the low-dimensional system with the symmetry relationships $\Lb^\intercal = -\Lb$ and $\Mb^\intercal = \Mb$, but still cannot guarantee metriplectic structure preservation.  Since $\uu\ut \neq \bb{I}$, this gives only 
\begin{align*}
    \Lb\nsh &= \ut\LL\uu\ut\nabla S \neq \bm{0}, \\
    \Mb\neh &= \ut\MM\uu\ut\nabla E \neq \bm{0},
\end{align*}
so that the first and second laws of thermodynamics remain violated.  Note that in the case $\LL=\bm{0}$ or $\MM=\bm{0}$, the conditions \eqref{eq:compatibility} are vacuous and \eqref{eq:oldrom} provides a useful reduced order model which preserves some structure present in the original system.  In fact, this ROM in the case $\MM=\bm{0}$ is precisely the Hamitonian structure-preserving ROM proposed in \cite{gong2017}.



\subsection{Metriplectic Structure Preservation}
One way to preserve metriplectic structure is to incorporate the compatibility conditions \eqref{eq:compatibility} explicitly.  Let $\bb{e}_k$ denote the $k^{th}$ standard basis vector in $\mathbb{R}^N$ and denote $\nabla F = F^k\bb{e}_k$ where $F^k = \bb{e}_k\cdot\nabla F = \partial F/\partial x^k$.  For each $1\leq i,j \leq N$, consider solving the underdetermined equations
\begin{equation}\label{eq:tens}
\begin{split}
    L_{ij} &= \xi_{ijk}S^k, \\
    M_{ij} &= \zeta_{ikjl}E^kE^l,
\end{split}
\end{equation}
for tensors $\bm{\xi},\bm{\zeta}$ in $\lr{\mathbb{R}^N}^{\otimes 3},\lr{\mathbb{R}^N}^{\otimes 4}$ respectively which may depend on the state $\xx$ and which satisfy the symmetry relations
\begin{equation}\label{eq:syms}
\begin{split}
    \xi_{ijk} &= -\xi_{jik} = -\xi_{ikj}, \\
    \zeta_{ikjl} &= -\zeta_{kijl} = -\zeta_{iklj} = \zeta_{jlik}.
\end{split}
\end{equation}
This is always possible as long as $\nabla E,\nabla S$ are nonzero for all $\xx$ (c.f. Proposition~\ref{prop:tensorcomp}), and otherwise the metriplectic structure is degenerate.  Notice that \eqref{eq:syms} is simply the coordinate-wise expression of total antisymmetry in $\bm{\xi}$ as well as symmetric $12-34$ pairwise antisymmetry in $\bm{\zeta}$.  The advantage of this approach is that the degeneracy conditions now follow immediately from the symmetries \eqref{eq:syms}.  Identifying $\mathbb{R}^N$ with its dual to make use of the canonical ``index-raising'' isomorphism, for any $1\leq i\leq N$ it follows that
\begin{align*}
    \left(\LL\nabla S\right)^i &= \xi^i_{jk}S^kS^j = \xi^i_{kj}S^jS^k = -\xi^i_{jk}S^jS^k = 0,\\
    \left(\MM\nabla E\right)^i &= \zeta^i_{kjl}E^kE^lE^j = -\zeta^i_{kjl}E^kE^lE^j = 0,
\end{align*}
since in either case there is a contraction of the same vector over an antisymmetric pair of indices.  Therefore, if reduced-order objects $\xih\in\lr{\mathbb{R}^n}^{\otimes 3},\zh\in\lr{\mathbb{R}^n}^{\otimes 4}$ which preserve \eqref{eq:syms} can be found, the degeneracy conditions \eqref{eq:compatibility} will hold by construction.  


To that end, notice that \eqref{eq:FOM} and \eqref{eq:nospROM} can be rewritten respectively as 
\begin{align*}
    \dot{\xx} &= \bm{\xi}\left(\nabla S\right)\nabla E + \bm{\zeta}\left(\nabla E,\nabla E\right)\nabla S, \\
    \dot{\xh} &= \ut\bm{\xi}\left(\nabla S\right)\nabla E + \ut\bm{\zeta}\left(\nabla E,\nabla E\right)\nabla S,
\end{align*}
where the tensors $\bm{\xi},\bm{\zeta}$ are written suggestively to indicate their role as matrix-valued mappings.  This encourages the search for reduced-order matrices 
\begin{align*}
   \Lh &= \xih\lr{\nsh}, \\
    \Mh &= \zh\lr{\neh,\neh},
\end{align*} 
defined in terms of reduced tensors $\xih,\zh$ which satisfy 
\begin{equation}\label{eq:normal}
\begin{split}
    \ut\bm{\xi} &= \xih\lr{\ut}\ut, \\
    \ut\bm{\zeta} &= \zh\lr{\ut,\ut}\ut,
\end{split}
\end{equation} 
or, in coordinates and with $1\leq a,b,c,d \leq n$,
\begin{align*}
    U^a_i\xi^i_{jk} &= \hat{\xi}^a_{bc}U^b_jU^c_k, \\
    U^a_i\zeta^{i}_{kjl} &= \hat{\zeta}^a_{cbd}U^c_kU^d_lU^b_j.
\end{align*}
This is generally impossible when $N>n$ for the same reason as before, but \eqref{eq:normal} can again be interpreted as normal equations whose solutions yield
\begin{align*}
    \xih &= \ut\bm{\xi}\left(\uu\right)\uu, \\
    \zh  &= \ut\bm{\zeta}\left(\uu,\uu\right)\uu,
\end{align*}
or, once more in coordinate language,
\begin{align*}
    \hat{\xi}^a_{bc} &= U^a_i\xi^i_{jk}U^j_bU^k_c, \\
    \hat{\zeta}^a_{cbd} &= U^a_i\zeta^i_{kjl}U^k_cU^l_dU^j_b.
\end{align*}
It is straightforward to check that $\xih,\zh$ computed this way satisfy the necessary symmetries \eqref{eq:syms}, in which case the ROM
\begin{equation}\label{eq:ROM}
    \dot{\xh} = \rb{\xh}{\neh\lr{\xh}} + \ib{\xh}{\nsh\lr{\xh}} := \Lh(\xt)\neh\lr{\xh} + \Mh(\xt)\nsh\lr{\xh},
\end{equation}
will preserve the original metriplectic structure by construction.  Suppressing dependence on the state, it follows from symmetry considerations as before that 
\begin{align*}
    \hat{\LL}\nsh &= \xih\lr{\nsh}\nsh = \bm{0}, \\
    \Mh\neh &= \zh\lr{\neh,\neh}\neh = \bm{0},
\end{align*}
so that the first and second laws of thermodynamics become
\begin{align*}
    \dot{\hat{E}} &= \dot{\xh}\cdot\neh = \Lh\neh\cdot\neh + \nsh\cdot\Mh\neh = 0, \\
    \dot{\hat{S}} &= \dot{\xh}\cdot\nsh = -\neh\cdot\Lh\nsh + \Mh\nsh\cdot\nsh = \left|\nsh\right|^2_{\hat{M}} \geq 0,
\end{align*}
as desired.

\begin{remark}\label{rem:notmet}
It should be mentioned that the ``metriplectic-preserving'' moniker used to describe \eqref{eq:ROM} is a slight abuse of terminology in the case $\LL=\LL(\xx)$, as the reduced Poisson bracket $\{\cdot,\cdot\}$ generated by $\xih = \xih(\xx)$ is not guaranteed to satisfy the Jacobi identity for arbitrary $F,G,H:\mathbb{R}^n\to\mathbb{R}$,
\[ \rb{F}{\rb{G}{H}} + \rb{G}{\rb{H}{F}} + \rb{H}{\rb{F}{G}} = \bm{0}.\]
In fact, a direct calculation shows that the above Jacobi identity is equivalent to the statement (sum on $l$)
\[ \hat{L}_{il}\hat{L}_{jk,l}+\hat{L}_{jl}\hat{L}_{ki,l}+\hat{L}_{kl}\hat{L}_{ij,l} = 0, \qquad 1\leq i,j,k\leq n,\]
which may not vanish if $\LL(\xx)$ is state-dependent.  Conversely, the same calculation shows that the ROM \eqref{eq:ROM} is truly metriplectic outside of this case, since the terms involving second derivatives cancel solely due to symmetry properties. 
\end{remark}

\section{Computing the Metriplectic ROM}

To make use of the metriplectic ROM \eqref{eq:ROM}, it is necessary to have a reasonable way to compute $\bm{\xi},\bm{\zeta}$ and their reduced-order counterparts $\xih,\zh$.  Note that it is prohibitively expensive to compute general $3^{rd}$ and $4^{th}$-order tensors online even for moderately large $N$, as the number of tensor entries is exponential in the degree.  Therefore, it is necessary to find an efficient way to express \eqref{eq:ROM} which does not require the explicit construction of these tensors.  To that end, recall that $\MM$ is SPSD and so has the eigenvalue decomposition
\[ \MM = \sum_{\alpha=1}^r \lambda_\alpha\, \bb{m}^\alpha\otimes\bb{m}^\alpha, \]
where $1\leq r\leq N$ and all $\lambda_\alpha>0$ are positive.  If it is possible to write $\bb{m}^\alpha = \Am^\alpha\nabla E$ for some SS matrices $\Am^\alpha$, the tensor
\[ \bm{\zeta} = \sum_{\alpha=1}^r \lambda_\alpha\,\Am^\alpha\otimes \Am^\alpha, \]
would satisfy the desired conditions \eqref{eq:tens} and \eqref{eq:syms}.  The next result shows that this can always be done for metriplectic systems.


\begin{proposition}\label{prop:tensorcomp}
Let $k_0, k_1$ be indices such that $E^{k_0}\neq 0$ and $S^{k_1}\neq 0$.  Suppose $\MM = \sum_{\alpha}\lambda_\alpha\,\bb{m}^\alpha\otimes\bb{m}^\alpha$ and $\bb{B},\bb{C}$ are tensors such that $B^\alpha_{ik_0} = m^\alpha_i/E^{k_0}$, $C_{ijk_1} = L_{ij}/S^{k_1}$, and $B^\alpha_{ik}=C_{ijk}=0$ otherwise.  Then, the tensors $\bm{\xi}$ and $\bm{\zeta} = \sum_\alpha \lambda_\alpha\,\Am^\alpha\otimes \Am^\alpha$ with components
\begin{align*}
    \xi_{ijk} &= \frac{1}{2}\left(C_{ijk}+C_{jki}+C_{kij}-C_{jik}-C_{kji}-C_{ikj}\right), \\
    A^\alpha_{ik} &= B^\alpha_{ik}-B^\alpha_{ki},
\end{align*}
satisfy \eqref{eq:tens} and \eqref{eq:syms}.
\end{proposition}
\begin{proof}
This is a direct consequence of the compatibility conditions $\LL\nabla S = \MM\nabla E = \bm{0}$.  More precisely,
Let $\bb{B},\bb{C}$ be as in the statement of the Proposition.  First, recall that all eigenvectors $\bb{m}^\alpha$ of $\MM$ are linearly independent with positive eigenvalues $\lambda_\alpha>0$.  The compatibility condition $\MM\nabla E =\bm{0}$ then becomes
\[ \MM\nabla E = \sum_{\alpha=1}^r \lambda_\alpha\left(\bb{m}^\alpha\cdot\nabla E\right)\bb{m}^\alpha = \bm{0}, \]
so it follows that $\bb{m}^\alpha\cdot\nabla E=0$ for all $1\leq\alpha\leq r$.
Using $[F]$ to denote the indicator function of the statement $F$, the definition of $\Am$ then implies that for all $\alpha,i$,
\begin{align*}
    A^\alpha_{ik}E^k = \left(B^\alpha_{ik}-B^\alpha_{ki}\right)E^k = m^\alpha_i - [i=k_0]\,B^\alpha_{ki}E^k = m^\alpha_i - [i=k_0]\,\frac{\bb{m}^\alpha \cdot \nabla E}{E^{k_0}} = m_i^\alpha,
\end{align*}
which establishes that $\Am^\alpha\nabla E = \bb{m}^\alpha$ for all $\alpha$.  It follows that for all $1\leq i,j\leq N$,
\[ \zeta_{ikjl}E^kE^l = \sum_{\alpha=1}^r\lambda_\alpha A^\alpha_{ik} A^\alpha_{jl}E^kE^l = \sum_{\alpha=1}^r\lambda_\alpha m^\alpha_i m^\alpha_j = M_{ij},\]
which establishes the second part of \eqref{eq:tens}.  The corresponding symmetry relationship in \eqref{eq:syms} follows immediately from the skew-symmetry of $\Am^\alpha$ and the definition of $\bm{\zeta}$ as a sum of symmetric products.  Moving to the case of $\bm{\xi}$, it is straightforward to compute
\begin{align*}
    2\xi_{ijk}E^k &= \left(C_{ijk}+C_{jki}+C_{kij}-C_{jik}-C_{kji}-C_{ikj}\right)S^k\\ 
    &= \left(L_{ij}-L_{ji}\right) + [i=k_1]\left(C_{jki}-C_{kji}\right)S^k + [j=k_1]\left(C_{kij}-C_{ikj}\right)S^k \\
    &= 2\left(L_{ij} + [i=k_1]\,\frac{\left(\LL\nabla S\right)_j}{S^{k_1}} - [j=k_1]\,\frac{\left(\LL\nabla S\right)_i}{S^{k_1}}\right) = 2L_{ij},
\end{align*}
which follows since $\LL\nabla S = \bm{0}$. This establishes \eqref{eq:tens}, and the antisymmetry condition \eqref{eq:syms} is immediate since $\bm{\xi}$ is a multiple of the antisymmetrization of $\bb{C}$.
\end{proof}

\begin{remark}
It is interesting to note that a weaker form of Proposition~\ref{prop:tensorcomp} remains true in the general case of any 4-tensor $\bm{\zeta}$ satisfying the symmetries \eqref{eq:syms}.  In particular, it follows that $\bm{\zeta}$ decomposes as the sum of at most $N^2$ outer products with antisymmetric factors.  This is a consequence of a simple reshaping argument:  Consider the column-wise unfolding of $\bm{\zeta}$, denoted $\bar{\bm{\zeta}}$ and defined componentwise as
\[ \bar{\zeta}_{(k-1)N+i,(l-1)N+j} = \zeta_{ikjl}, \]
which is an $N^2\times N^2$ matrix that is symmetric by the 12-34 interchange symmetry of $\bm{\zeta}$.  Its spectral decomposition is 
\[\bar{\bm{\zeta}} =\sum_{\alpha=1}^{s} \mu_\alpha\, \bb{w}^\alpha \otimes \bb{w}^\alpha,\]
where $1\leq s\leq N^2$ and $\bb{w}^\alpha\cdot \bb{w}^\beta = \delta^{\alpha\beta}$.  So, if $\Am^{\alpha}$ is the folding of $\bb{w}^\alpha$ for all $1\leq\alpha\leq s$ i.e. $A^\alpha_{ij} = w^\alpha_{(j-1)N + i}$ for all $1\leq i,j\leq N$, it follows that $\Am^\alpha:\Am^\beta = \delta^{\alpha\beta}$, each $\Am^\alpha$ is skew-symmetric, and
\[\bm{\zeta} = \sum_{\alpha=1}^{s} \mu_\alpha\, \Am^\alpha\otimes \Am^\alpha.\]
\end{remark}


Proposition~\ref{prop:tensorcomp} is constructive and could be used as a recipe for computing the tensors $\bm{\xi},\bm{\zeta}$ which are necessary for the metriplectic ROM \eqref{eq:ROM}.  On the other hand, an even simpler description of these objects can be found by appealing to some basic facts from exterior algebra; unfamiliar readers will find everything necessary in e.g. \cite[Chapter 1]{hestenes2012} or \cite[Chapter 19]{tu2017}.  Beginning with the computation of $\bm{\xi}$, recall that any SS matrix (more formally, any antisymmetric $(0,2)$-tensor) decomposes as a sum of basis bivectors $\ee_i\wedge\ee_j = \ee_i\otimes\ee_j-\ee_j\otimes\ee_i.$  In view of this, it is convenient to identify the matrix $\LL$ with the bivector sum $\LL = \sum_{j<i}L^{ij}\ee_i\wedge\ee_j$, where the functions $L^{ij}(\xx)$ may depend on the state $\xx$.  The action of $\LL$ on a vector $\bb{v} \in \mathbb{R}^N$ given by right contraction $\LL\cdot\bb{v}$ is then identical to the matrix-vector product, since
\[ \bb{Lv} = \sum_{j<i}\left(L^{ij}\left(\ee_j\cdot\bb{v}\right)\ee_i - L^{ij}\left(\ee_i\cdot\bb{v}\right)\ee_j\right) = \sum_{j<i}L^{ij}v_j\ee_i - \sum_{j>i} L^{ji}v_j\ee_i = \sum_{i,j}L^{ij}v_j\ee_i, \]
where the skew-symmetry of $\LL$ was used along with the relationship 
\begin{equation}\label{eq:leftfromright}
   \left(\bb{b}\wedge\bb{c}\right)\cdot\bb{a} = (-1)^{1(2+1)}\bb{a}\cdot\left(\bb{b}\wedge\bb{c}\right) = (\bb{a}\cdot\bb{c})\bb{b}-(\bb{a}\cdot\bb{b})\bb{c}, \qquad \bb{a},\bb{b},\bb{c}\in\mathbb{R}^N.
\end{equation}
The advantage of this identification is a simple and coordinate-free solution to \eqref{eq:tens} which is computable following Proposition~\ref{prop:tensorcomp} and does not require the storage of a rank-3 tensor. Before continuing, note the following general relationship for vectors $\bb{a},\bb{b}$ and  bivectors $\bb{C}$ which will be used in what follows,
\begin{equation}\label{eq:vectrivec}
    \lr{\bb{C}\wedge\bb{b}}\cdot\bb{a} = \bb{a}\cdot\lr{\bb{C}\wedge\bb{b}} = \lr{\bb{a}\cdot\bb{C}}\wedge\bb{b} + \lr{\bb{a}\cdot\bb{b}}\bb{C}.
\end{equation}
Now, choose an index $k_1$ such that $S^{k_1}\neq 0$ and define 
\[\bm{\xi} = \LL\wedge\sk,\qquad \sk=\frac{\ee_{k_1}}{S^{k_1}}.\]
Using \eqref{eq:vectrivec} and the notational convenience $\bm{\xi}\lr{\vv} = \bm{\xi}\cdot\vv$ for vectors $\vv\in\mathbb{R}^N$, it follows from the definitions of $\LL,\sk$, the antisymmetry of the exterior product, and the degeneracy condition $\LL\nabla S = 0$ that $\bm{\xi}$ is totally antisymmetric and satisfies the equality
\[ \bm{\xi}\left(\nabla S\right) = \left(\nabla S \cdot \LL\right)\wedge\sk + \lr{\sk\cdot\nabla S}\LL = -\LL\nabla S\wedge\sk + \LL = \LL, \]
so that $\bm{\xi}$ also solves \eqref{eq:tens}.  Therefore, it is straightforward to verify that the reduced tensor $\xih$ which solves the normal equations \eqref{eq:normal} is 
\[ \xih = \ut\LL\uu\wedge\ut\sk = \Lb\wedge \hsk, \]
where matrix products occur before the wedge product by convention, $\Lb = \ut\LL\uu$ as before, and $\hsk = \ut\sk$.  Using again  \eqref{eq:vectrivec} along with the skew-symmetry of $\LL$, this implies that the structure-preserving counterpart to $\LL$ is given in bivector form by 
\[ \Lh = \xih\left(\nsh\right) = -\Lb\nsh\wedge\hsk + \lr{\hsk\cdot\nsh}\Lb.\]
The degeneracy condition $\Lh\nsh = 0$ is now satisfied by construction, since $\LL$ is skew-symmetric and so 
\[ \Lh\nsh = \nsh\cdot\Lb\lr{\nsh} = (0)\hsk +  \lr{\hsk\cdot\nsh}\Lb\nsh - \lr{\hsk\cdot\nsh}\Lb\nsh = \bm{0}, \]
which follows from the same algebraic relationships.

\begin{remark}
As in the computation above, it is usually beneficial to avoid expressing multivectors in a traditional tensor basis, since the number of terms grows factorially with the degree.  While bivectors are easily expressed coordinate-wise using the relations $\ee_i\wedge\ee_j = \ee_i\otimes\ee_j-\ee_j\otimes\ee_i$, basis trivectors already require a 6-term sum indexed over the permutation group $S_3$, producing much larger intermediates which must be computed and stored.
\end{remark}

It is similarly beneficial to rewrite the 4-tensor $\bm{\zeta}$.  Note that the expression for $\bm{\zeta}$ in Proposition~\ref{prop:tensorcomp} is already somewhat useful for this purpose because it enables the computation of the metriplectic ROM \eqref{eq:ROM} without actually storing or manipulating the full object.  This is because the term entering \eqref{eq:FOM} rewrites as
\[ \MM\nabla S = \bm{\zeta}\left(\nabla E,\nabla E\right)\nabla S =  \sum_{\alpha=1}^r \lambda_\alpha \left(\nabla S \cdot \Am^\alpha\nabla E\right)\Am^\alpha\nabla E, \]
so that $\bm{\zeta}$ is accessed exclusively through the matrices $\Am^\alpha$.  However, following the construction in Proposition~\ref{prop:tensorcomp} still requires storing an array of skew-symmetric matrices which are extraordinarily sparse and often depend on the state $\xx$, adding unnecessary computational expense.  While the $\xx$-dependence is not easy to handle, the storage and computation of the $\Am^\alpha$ can be reduced with an exterior algebraic factorization as before.

\begin{remark}
In practice, it is usually easier to work with $\bb{m}^\alpha \gets \sqrt{\lambda_\alpha}\bb{m}^\alpha$.  From now on, we write $\MM = \sum_\alpha \bb{m}^\alpha\otimes\bb{m}^\alpha$ with the understanding that the eigenvectors $\bb{m}^\alpha$ no longer have unit magnitude.  The reader can check that the conclusions of  Proposition~\ref{prop:tensorcomp} are unaffected by this change.  
\end{remark}

Again, it is useful to identify the matrices $\Am^\alpha$ with the bivectors $\Am^\alpha = \sum_{j<i}A^{\alpha,ij}\ee_i\wedge\ee_j$, so that for each $1\leq\alpha\leq r$ there is a decomposition following Proposition~\ref{prop:tensorcomp},
\[ \Am^\alpha = \am^\alpha\wedge\ee_{k_0}, \qquad \am^\alpha = \frac{\bb{m}^\alpha}{E^{k_0}}, \]
where $k_0$ is an index such that $E^{k_0}\neq 0$.  Since the normal equations \eqref{eq:normal} for $\zh$ are solved when $\hat{\Am}^\alpha = \ut\Am^\alpha\uu$, this means the reduced-order $\hat{\Am}^\alpha$ are given by 
\[ \hat{\Am}^\alpha = \ah^\alpha \wedge \uu^{k_0},\qquad \ah^\alpha = \frac{\ut\bb{m}^\alpha}{E^{k_0}}, \]
where $\uu^{k_0} = \ut\ee_{k_0}$ denotes the $k_0^{th}$ row of $\uu$.  This affords a simple representation for the matrix-vector products
\[ \hat{\Am}^\alpha\neh = -\neh\cdot\lr{\ah^\alpha\wedge\uu^{k_0}} = \lr{\neh\cdot\uu^{k_0}}\ah^\alpha - \lr{\neh\cdot\ah^\alpha}\uu^{k_0},\]
leading to greater efficiency in the numerical implementation.  Moreover, it is easy to see that $\hat{\Am}^\alpha$ remains skew-symmetric for all $\alpha$, so that $\zh\lr{\neh,\neh}\neh = \bm{0}$ by construction.  Putting all of the computations in this Section together yields the following expression of the ROM \eqref{eq:ROM} which facilitates the numerical experiments in Section~\ref{sec:numerics}.

\begin{theorem}[Metriplectic structure-preserving ROM]\label{thm:ROM}
    Suppose $\LL$, $\MM = \sum_\alpha\bb{m}^\alpha\otimes\bb{m}^\alpha$, $E$ and $S$ describe a metriplectic dynamical system \eqref{eq:FOM} with state $\xx\in\mathbb{R}^N$ and associated tensors $\bm{\xi}, \bm{\zeta}$ satisfying \eqref{eq:tens} and \eqref{eq:syms} for all $\xx$.  Suppose $\xh\in\mathbb{R}^n$ is a low-dimensional approximation to the state in the sense that $\xx\approx\xt = \xx_0 + \uu\xh$ for some initial state $\xx_0\in\mathbb{R}^N$ and linear mapping $\uu\in\mathbb{R}^{N\times n}$.  Let $1\leq k_0,k_1\leq N$ be indices such that $E^{k_0}(\xx)\neq 0$ and $S^{k_1}(\xx)\neq 0$ for all $\xx$ (otherwise the conclusion holds locally) and define 
    \begin{align*}
        \xih &= \Lb\wedge\hsk,\qquad \hsk=\frac{\uu^{k_1}}{S^{k_1}}, \\
        \hat{\Am}^\alpha &= \ah^\alpha \wedge \uu^{k_0},\qquad \ah^\alpha = \frac{\ut\bb{m}^\alpha}{E^{k_0}},
    \end{align*}
    where $\Lb = \ut\LL\uu$ and $\zh = \sum_\alpha \hat{\Am}^\alpha\otimes\hat{\Am}^\alpha$.  Then, denoting $\hat{F} = F\circ\xt$ for any real-valued function $F$, the ROM 
    \begin{align*}
        \dot{\xh} &= \rb{\xh}{\neh\lr{\xh}} + \ib{\xh}{\nsh\lr{\xh}} \\
        &:=\Lh\lr{\xt}\neh\lr{\xh} + \Mh\lr{\xt}\nsh\lr{\xh} \\
        &= \xih\lr{\xt}\lr{\nsh\lr{\xh}}\neh\lr{\xh} + \zh\lr{\xt}\lr{\neh\lr{\xh},\neh\lr{\xh}}\nsh\lr{\xh},
    \end{align*}
    preserves the original metriplectic structure.  Suppressing the potential state dependence, this dynamical system has the explicit representation
    \begin{align*}
        \dot{\xh} &= \lr{\neh\cdot\Lb\nsh}\hsk - \lr{\hsk\cdot\neh}\Lb\nsh + \lr{\hsk\cdot\nsh}\Lb\neh \\
        &\quad+ \sum_{\alpha=1}^r \lr{\Am^\alpha\neh\cdot\nsh}\Am^\alpha\neh,
    \end{align*}
    where $\Am^\alpha\neh$ is expressed in terms of $\ah$ and the $k_0^{th}$ row of $\uu$ as
    \[ \hat{\Am}^\alpha\neh = \lr{\neh\cdot\uu^{k_0}}\ah^\alpha - \lr{\neh\cdot\ah^\alpha}\uu^{k_0}.\]
\end{theorem}

\begin{remark}
Note that the quantities $\Lh,\Am^\alpha,\bb{m}^\alpha,\hsk,\ah^\alpha$ may depend on $\xt$, so that the metriplectic ROM generally depends on the full $N$-dimensional input space.  On the other hand, there are many special cases where this dependence can be mitigated or removed entirely.  Example \ref{ex:big} in Section 5 gives one such illustration.  Future work will also consider hyper-reduction strategies such as DEIM \cite{chaturantabut2010} for this purpose.
\end{remark}

\section{Error estimate}
It is important to know that the metriplectic ROM described in Theorem~\ref{thm:ROM} will converge to the true solution as the reduced dimension $n$ approaches the full resolution $N$.  The results of this Section show that this is indeed the case when the mapping $\uu$ is generated by POD and the snapshot matrix is augmented with gradient information.  To explain why, let $\norm{\cdot}$ denote the usual norm in $L^2([0,T])$ and denote $\ww = \xx-\xx_0$.  Choose weighting parameters $\mu,\nu\in\mathbb{R}$ as well as $n_t$ instants $t_i\in[0,T]$ and consider the snapshot matrix $\bb{Y}\in\mathbb{R}^{N\times 3n_t}$,
\[\bb{Y} = \begin{pmatrix}\ww\lr{t_i} & \mu\nabla E\lr{\xx(t_i)} & \nu\nabla S\lr{\xx(t_i)}\end{pmatrix}_{i=1}^{n_t}.\]

\begin{remark}
For simplicity, all experiments in Section~\ref{sec:numerics} use the weights $\mu = \nu = 1$.
\end{remark}

Suppose $\bb{Y}$ has rank $r$ and denote by $\sigma_i$ the $i^{th}$  singular value of $Y$.  Then, if $\uu \in \mathbb{R}^{N\times n}$ is the rank $n<r$ matrix of left singular vectors and $\bb{P}^\perp = \bb{I}-\uu\ut$ is orthogonal projection onto the complement of $\mathrm{Span}\lr{\uu}$, standard POD theory (see e.g. \cite{liang2002}) implies that 
\begin{equation}\label{eq:PODest}
    \norm{\bb{P}^\perp\ww}^2 + \mu^2\norm{\bb{P}^\perp\nabla E}^2 + \nu^2\norm{\bb{P}^\perp\nabla S}^2 = \sum_{j>n} \sigma_j^2.
\end{equation}
Recall also the Lipschitz constant and logarithmic Lipschitz constant, denoted for a general function $F$ between metric spaces as 
\begin{align*}
    C_F = \sup_{\bb{u}\neq\bb{v}} \frac{\nn{F(\bb{u})-F(\bb{v})}}{\nn{\bb{u}-\bb{v}}}, \qquad c_F = \sup_{\bb{u}\neq\bb{v}} \frac{\lr{\bb{u}-\bb{v}}\cdot\lr{F(\bb{u})-F(\bb{v})}}{\nn{\bb{u}-\bb{v}}^2}.
\end{align*}
This can be used to derive the following result for the present scheme.


\begin{theorem}\label{thm:conv}
Let $\xx(t)$ denote the solution to the FOM \eqref{eq:FOM} with initial condition $\xx_0$ and let $\xh(t)$ denote the solution to the ROM \eqref{eq:ROM} with $\xih = \ut\bm{\xi}\lr{\uu}\uu$, $\zh = \ut\bm{\zeta}\lr{\uu,\uu}\uu$ and initial condition $\xh_0 = \bm{0}$.  Suppose $\bm{\xi},\bm{\zeta},\nabla E,\nabla S$ are Lipschitz continuous with bounded norms in space as well as  $L^2$-integrable in time.  Then, the approximation error satisfies
\[ \norm{\xx-\left(\xx_0+\uu\xh\right)}^2 \leq C(T,\bm{\xi},\bm{\zeta},\mu,\nu)\sum_{j>n}\sigma^2_j. \]
\end{theorem}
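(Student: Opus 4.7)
The plan is to decompose the error into an unavoidable POD projection part plus a residual governed by the reduced ODE, then close the residual via a Gr\"onwall estimate driven by the snapshot identity \eqref{eq:PODest}.

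\emph{Step 1: Splitting.} Writing $\ww = \xx-\xx_0$, $\bb{P} = \uu\ut$, and $\bb{e} = \ut\ww-\xh$, inserting $\bb{P}\ww$ yields
\[ \xx-\xt = \ww-\uu\xh = \bb{P}^\perp\ww + \uu\bb{e}, \]
so that $\norm{\xx-\xt}^2 \leq 2\norm{\bb{P}^\perp\ww}^2 + 2\norm{\bb{e}}^2$, where $\norm{\uu\bb{e}}=\norm{\bb{e}}$ since $\ut\uu=\bb{I}_n$.  The first summand is already controlled by \eqref{eq:PODest}, so it suffices to bound $\norm{\bb{e}}$.

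\emph{Step 2: Residual ODE.} Using $\xih = \ut\bm{\xi}\lr{\uu}\uu$ and $\zh = \ut\bm{\zeta}\lr{\uu,\uu}\uu$, each ROM term takes the form $\ut\bm{\xi}\lr{\bb{P}\nabla S(\xt)}\bb{P}\nabla E(\xt)$ (respectively $\ut\bm{\zeta}\lr{\bb{P}\nabla E(\xt),\bb{P}\nabla E(\xt)}\bb{P}\nabla S(\xt)$), with $\bm{\xi},\bm{\zeta}$ evaluated at $\xt$.  Subtracting these from the FOM contributions, $\dot{\bb{e}}$ becomes a sum of two differences of the model form
\[ \ut\bm{\xi}\lr{\nabla S(\xx)}\nabla E(\xx) - \ut\bm{\xi}\lr{\bb{P}\nabla S(\xt)}\bb{P}\nabla E(\xt), \]
and the analogue for $\bm{\zeta}$.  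I would telescope each such difference in three stages: (i) swap the state $\xx\to\xt$ in the tensor and both gradients, costing $O\lr{|\xx-\xt|}$ by Lipschitz continuity of $\bm{\xi},\bm{\zeta},\nabla E,\nabla S$; (ii) replace each full gradient $\nabla F(\xt)$ by $\bb{P}\nabla F(\xt) = \nabla F(\xt) - \bb{P}^\perp\nabla F(\xt)$, each such swap costing $O\lr{|\bb{P}^\perp\nabla F(\xt)|}$ since the tensor and remaining gradients are uniformly bounded; (iii) bridge to the snapshot orbit via the triangle inequality $|\bb{P}^\perp\nabla F(\xt)| \leq |\bb{P}^\perp\nabla F(\xx)| + C_{\nabla F}|\xx-\xt|$.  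Substituting $|\xx-\xt|\leq |\bb{P}^\perp\ww|+|\bb{e}|$ from Step 1 and collecting terms produces the pointwise bound
\[ |\dot{\bb{e}}| \leq C_1|\bb{e}| + C_2\lr{|\bb{P}^\perp\ww| + |\bb{P}^\perp\nabla E(\xx)| + |\bb{P}^\perp\nabla S(\xx)|}, \]
with $C_1,C_2$ depending only on the Lipschitz constants and sup-norms of $\bm{\xi},\bm{\zeta},\nabla E,\nabla S$.

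\emph{Step 3: Gr\"onwall and POD identity.} Taking the inner product with $\bb{e}$, applying Young's inequality, and noting $\bb{e}(0)=\bm{0}$ since $\xh_0=\bm{0}$ and $\ww(0)=\bm{0}$ yields
\[ \tfrac{d}{dt}|\bb{e}|^2 \leq C\lr{|\bb{e}|^2 + |\bb{P}^\perp\ww|^2 + |\bb{P}^\perp\nabla E|^2 + |\bb{P}^\perp\nabla S|^2}. \]
The integral form of Gr\"onwall's inequality then gives a pointwise bound $|\bb{e}(t)|^2 \leq C\,e^{CT}\int_0^T\lr{\cdots}\,ds$.  Integrating once more in $t\in[0,T]$, applying the POD identity \eqref{eq:PODest}, and absorbing the factor $\max\lr{1,\mu^{-2},\nu^{-2}}$ produces $\norm{\bb{e}}^2 \leq C(T,\bm{\xi},\bm{\zeta},\mu,\nu)\sum_{j>n}\sigma_j^2$, which combined with Step 1 yields the claim.

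\emph{Main obstacle.} The principal difficulty is the bookkeeping in Step 2: the ROM tensors $\xih,\zh$ fold $\bb{P}$ into every gradient slot of $\bm{\xi},\bm{\zeta}$, and the telescoping must be arranged so each residual term ends up either as a multiple of $|\bb{e}|$ (absorbable by Gr\"onwall) or as a projection error $|\bb{P}^\perp\nabla F(\xx)|,|\bb{P}^\perp\ww|$ captured by the augmented snapshot matrix.  The tunable weights $\mu,\nu$ in \eqref{eq:PODest} then reappear in the final constant, which shows that augmenting the snapshot matrix with gradient information is precisely what is needed to close the estimate.
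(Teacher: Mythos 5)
Your proposal is correct and follows essentially the same route as the paper: the identical splitting of the error into the POD projection part plus an in-span residual ($\yy=\uu\bb{e}$ in the paper's notation), a Lipschitz telescoping of the FOM/ROM right-hand sides that leaves only terms proportional to $\nn{\bb{e}}$, $\nn{\bb{P}^\perp\ww}$, $\nn{\bb{P}^\perp\nabla E}$, $\nn{\bb{P}^\perp\nabla S}$, and a Gr\"onwall closure combined with the gradient-augmented identity \eqref{eq:PODest}. The only (harmless) deviations are organizational: the paper groups the residual as $\bb{T}_1+\bb{T}_2+\bb{T}_3$ and treats $\bb{F}\lr{\bar{\xx}}-\bb{F}\lr{\uu\xh}$ with the logarithmic Lipschitz constant $c_F$ acting on $\dot{\nn{\yy}}$, whereas you telescope in a different order and use plain Lipschitz constants with Young's inequality on $\nn{\bb{e}}^2$, which yields a somewhat larger but equally admissible constant $C(T,\bm{\xi},\bm{\zeta},\mu,\nu)$.
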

\begin{proof}
First, suppose $\xx_0 = \bm{0}$, so that $\xt = \uu\xh$ and the approximation error becomes
\[ \xx - \uu\xh = \left(\xx-\uu\ut\xx\right)+\left(\uu\ut\xx-\uu\xh\right) = \bb{P}^\perp\xx+\yy.\]
For cleanliness of notation, we denote $\bar{\bb{X}} = \uu\ut\bb{X}$ and
\begin{align*}
    \bb{F}(\xx) &= \bar{\bm{\xi}}(\xx)\left(\bar{\nabla}S(\xx)\right) \bar{\nabla}E(\xx) + \bar{\bm{\zeta}}(\xx)\lr{\bar{\nabla}E(\xx),\bar{\nabla}E(\xx)}\bar{\nabla}S(\xx).
\end{align*}
It follows from \eqref{eq:FOM} and \eqref{eq:ROM} that the component of the error captured by $\yy$ decomposes into three terms,
\begin{align*}
    \dot{\yy} &= \dot{\bar{\xx}}-\uu\dot{\xh} = \bar{\bm{\xi}}\lr{\nabla S(\xx)}\nabla E(\xx) + \bar{\bm{\zeta}}\lr{\nabla E(\xx),\nabla E(\xx)}\nabla S(\xx) -\bb{F}\lr{\uu\xh} \\
    &= \bar{\bm{\xi}}\lr{\nabla S(\xx)}\nabla E(\xx) + \bar{\bm{\zeta}}\lr{\nabla E(\xx),\nabla E(\xx)}\nabla S(\xx) - \bb{F}(\xx) \\
    &\qquad+ \left(\bb{F}(\xx)-\bb{F}\lr{\bar{\xx}}\right) +  \left(\bb{F}\lr{\bar{\xx}}-\bb{F}\lr{\uu\xh}\right) \\
    &:= \bb{T}_1 + \bb{T}_2 + \bb{T}_3.
\end{align*}
Suppressing the $\xx$ argument, $\bb{T}_1$ can be written as
\begin{align*}
    \bb{T}_1 &= \bar{\bm{\xi}}\lr{\bb{P}^\perp\nabla S}\nabla E + \bar{\bm{\xi}}\lr{\bar{\nabla}S}\bb{P}^\perp\nabla E \\
    &\qquad+ \bar{\bm{\zeta}}\lr{\bb{P}^\perp\nabla E,\nabla E}\nabla S + \bar{\bm{\zeta}}\lr{\bar{\nabla}E, \bb{P}^\perp\nabla E}\bar{\nabla}S + \bar{\bm{\zeta}}\lr{\bar{\nabla}E,\bar{\nabla}E}\bb{P}^\perp\nabla S,
\end{align*}
so that its norm can be estimated as 
\begin{align*}
    \nn{\bb{T}_1} &\leq \lr{\nn{\bar{\bm{\xi}}}\nn{\bar{\nabla}S} + \nn{\bar{\bm{\zeta}}}\nn{\nabla E}\nn{\nabla S} + \nn{\bar{\bm{\zeta}}}\nn{\bar{\nabla}E}\nn{\bar{\nabla} S}}\nn{\bb{P}^\perp\nabla E} \\
    &\qquad+ \lr{\nn{\bar{\bm{\xi}}}\nn{\nabla E}+\nn{\bar{\bm{\zeta}}}\nn{\bar{\nabla}E}^2}\nn{\bb{P}^\perp\nabla S} \\
    &:= f_1\nn{\bb{P}^\perp\nabla E} + f_2\nn{\bb{P}^\perp\nabla S}.
\end{align*}
The term $\bb{T}_2$ can be rewritten in a similar fashion.  Collecting all terms involving $\bm{\xi}$ yields
\begin{align*}
    &\bar{\bm{\xi}}(\xx)\lr{\bar{\nabla}S(\xx)}\bar{\nabla}E(\xx) - \bar{\bm{\xi}}\lr{\bar{\xx}}\lr{\bar{\nabla}S\lr{\bar{\xx}}}\bar{\nabla}E\lr{\bar{\xx}} \\
    &\qquad= \lr{\bar{\bm{\xi}}(\xx)-\bar{\bm{\xi}}\lr{\bar{\xx}}}\lr{\bar{\nabla}S(\xx)}\bar{\nabla}E(\xx) \\
    &\qquad\qquad+ \bar{\bm{\xi}}\lr{\bar{\xx}}\lr{\bar{\nabla}S(\xx)-\bar{\nabla}S\lr{\bar{\xx}}}\bar{\nabla}E(\xx) \\
    &\qquad\qquad+ \bar{\bm{\xi}}\lr{\bar{\xx}}\lr{\bar{\nabla}S\lr{\bar{\xx}}}\lr{\bar{\nabla}E(\xx)-\bar{\nabla}E\lr{\bar{\xx}}},
\end{align*}
while collecting the terms involving $\bm{\zeta}$ gives
\begin{align*}
    &\bar{\bm{\zeta}}(\xx)\lr{\bar{\nabla}E(\xx),\bar{\nabla}E(\xx)}\bar{\nabla}S(\xx) - \bar{\bm{\zeta}}\lr{\bar{\xx}}\lr{\bar{\nabla}E\lr{\bar{\xx}},\bar{\nabla}E\lr{\bar{\xx}}}\bar{\nabla}S\lr{\bar{\xx}} \\
    &\qquad= \lr{\bar{\bm{\zeta}}(\xx)-\bar{\bm{\zeta}}\lr{\bar{\xx}}}\lr{\bar{\nabla}E(\xx),\bar{\nabla}E(\xx)}\bar{\nabla}S(\xx) \\
    &\qquad\qquad+ \bar{\bm{\zeta}}\lr{\bar{\xx}}\lr{\bar{\nabla}E(\xx)-\bar{\nabla}E\lr{\bar{\xx}},\bar{\nabla}E(\xx)}\bar{\nabla}S(\xx) \\
    &\qquad\qquad+ \bar{\bm{\zeta}}\lr{\bar{\xx}}\lr{\bar{\nabla} E\lr{\bar{\xx}},\bar{\nabla}E(\xx)-\bar{\nabla} E\lr{\bar{\xx}}}\bar{\nabla}S(\xx) \\
    &\qquad\qquad+ \bar{\bm{\zeta}}\lr{\bar{\xx}}\lr{\bar{\nabla}E\lr{\bar{\xx}},\bar{\nabla}E\lr{\bar{\xx}}}\lr{\bar{\nabla}S(\xx)-\bar{\nabla}S\lr{\bar{\xx}}}.
\end{align*}
Hence, the norm of $\nn{\bb{T}_2}$ is bounded above by
\begin{align*}
    \nn{\bb{T}_2} &\leq \nn{\uu\ut}\nn{\bar{\nabla}S(\xx)}\nn{\bar{\nabla}E(\xx)}\lr{C_\xi + C_{\zeta}\nn{\bar{\nabla}E(\xx)}}\nn{\bb{P}^\perp\xx} \\
    &\qquad+ \nn{\uu\ut}\nn{\bar{\bm{\xi}}\lr{\bar{\xx}}}\lr{C_{\nabla S}\nn{\bar{\nabla}E(\xx)}+C_{\nabla E}\nn{\bar{\nabla}S\lr{\bar{\xx}}}}\nn{\bb{P}^\perp\xx} \\
    &\qquad+ C_{\nabla E}\nn{\uu\ut}\nn{\bar{\bm{\zeta}}\lr{\bar{\xx}}}\nn{\bar{\nabla}S(\xx)}\lr{\nn{\bar{\nabla}E(\xx)}+\nn{\bar{\nabla}E\lr{\bar{\xx}}}}\nn{\bb{P}^\perp\xx} \\
    &\qquad+ C_{\nabla S}\nn{\uu\ut}\nn{\bar{\bm{\zeta}}\lr{\bar{\xx}}}\nn{\bar{\nabla}E\lr{\bar{\xx}}}^2\nn{\bb{P}^\perp\xx} \\
    &:= f_3\nn{\bb{P}^\perp\xx}
\end{align*}
The assumptions of the Theorem imply that the supremum of $f_3$ over $\xx\neq\bar{\xx}$ is finite, so the above inequality combined with $\bb{T}_2 = \bb{F}\lr{\xx}-\bb{F}\lr{\bar{\xx}}$ implies that $C_F \leq \sup f_3 < \infty$ and the Lipschitz constant $C_F$ exists.  As $|c_F| \leq C_F$, it follows that the logarithmic Lipschitz constant $c_F$ exists also.


Therefore, testing each term of $\dot{\yy}$ against $\yy$ and estimating with Cauchy-Schwarz yields
\begin{align*}
    \yy\cdot\bb{T}_1 &\leq \left(f_1\nn{\bb{P}^\perp\nabla E}+f_2\nn{\bb{P}^\perp\nabla S}\right)\nn{\yy}, \\
    \yy\cdot\bb{T}_2 &\leq f_3\nn{\bb{P}^\perp\xx}\nn{\yy}, \\
    \yy\cdot\bb{T}_3 &= \frac{\yy\cdot\bb{T}_3}{\nn{\yy}^2}\nn{\yy}^2 \leq c_F\nn{\yy}^2,
\end{align*}
where the scalar functions $f_j$ may depend on $t$ since their terms may depend on $\xx$.  Noting that $\dot{\nn{\yy}} = \left(1/\nn{\yy}\right)\lr{\yy\cdot\dot{\yy}}$ and using the estimates above yields the inequality
\[ \dot{\nn{\yy}}-c_F\nn{\yy} \leq f_1\nn{\bb{P}^\perp\nabla E} + f_2\nn{\bb{P}^\perp\nabla S} + f_3\nn{\bb{P}^\perp\xx}. \]
Multiplying both sides by the integrating factor $e^{-c_Ft}$ and applying Gronwall's inequality for $t\in[0,T]$ then gives
\[\nn{\yy} \leq \int_0^t e^{c_F(t-\tau)}\left(f_1\nn{\bb{P}^\perp\nabla E}+f_2\nn{\bb{P}^\perp\nabla S}+f_3\nn{\bb{P}^\perp\xx}\right)\,d\tau,\]
where we have used that $\yy(0) = \bm{0}$.
Then, Cauchy-Schwarz along with $T\geq t$ imply
\begin{align*}
    \nn{\yy}^2 &\leq C_4 \norm{f_1\nn{\bb{P}^\perp\nabla E}+f_2\nn{\bb{P}^\perp\nabla S}+f_3\nn{\bb{P}^\perp\xx}}^2 \\
    &\leq C_4\lr{\|f_1\|^2\norm{\bb{P}^\perp\nabla E}^2+\|f_2\|^2\norm{\bb{P}^\perp\nabla S}^2+\|f_3\|^2\norm{\bb{P}^\perp\xx}^2},
\end{align*}
where $C_4 = \int_0^T e^{2c_F(T-\tau)} = (e^{2c_FT}-1)/2c_F$.  Finally, integrating both sides in $t$ yields
\[\norm{\yy}^2 \leq TC_4\left(\|f_1\|^2\norm{\bb{P}^\perp\nabla E}^2+\|f_2\|^2\norm{\bb{P}^\perp\nabla S}^2+\|f_3\|^2\norm{\bb{P}^\perp\xx}^2\right),\]
so that in view of \eqref{eq:PODest} the error can be estimated as
\begin{align*}
    \norm{\xx-\uu\xh}^2 &\leq \norm{\bb{P}^\perp\xx}^2+\norm{\yy}^2 \\
    &\leq \lr{1 + TC_4\lr{\frac{\|f_1\|^2}{\mu^2} + \frac{\|f_2\|^2}{\nu^2}+\|f_3\|^2}}\sum_{j>n}\sigma^2_j = C\sum_{j>n}\sigma^2_j,
\end{align*}
as desired.  To complete the argument, consider what happens when $\xx_0\neq \bm{0}$. In this case, $\bb{w}=\xx-\xx_0$ satisfies $\dot{\bb{w}}=\dot{\bb{x}}$ and the ROM error is
\[ \xx - \left(\xx_0+\uu\xh\right) = \lr{\bb{w}-\bar{\bb{w}}}+\lr{\bar{\bb{w}}-\uu\xh} = \bb{P}^\perp\bb{w}+\bb{z}. \]
Since $\norm{\bb{P}^\perp\bb{w}}$ is controlled by \eqref{eq:PODest} and $\bb{w}_0 = \bm{0}$, this case can be analyzed identically to the case $\xx_0=\bm{0}$.
\end{proof}

\section{Numerical Examples}\label{sec:numerics}
This section evaluates the performance of the metriplectic ROM (referred to as the SP-ROM) proposed in Theorem~\ref{thm:ROM} on some benchmark problems.  To create a fair comparison, its performance is measured against that of the standard Galerkin POD-ROM \eqref{eq:nospROM} referred to as the G-ROM and a mild extension \eqref{eq:oldrom} of the Hamiltonian POD-ROM from \cite{gong2017} referred to as the EH-ROM which approximately preserves the compatibility conditions \eqref{eq:compatibility}.  The error metrics used for this purpose are the relative $\ell^2$ error and the maximum $\ell^2$ error, denoted respectively as
\[ \mathcal{E}_r\lr{\xt} = \sqrt{\frac{\sum_i\nn{\xx(t_i)-\xt(t_i)}^2}{\sum_i\nn{\xx(t_i)}^2}}, \qquad \mathcal{E}_\infty\lr{\xt} = \max_i \nn{\xx(t_i)-\xt(t_i)}, \]
where $\xx$ is the true solution and $1\leq i \leq n_t$ are the indices of the discretization points $t_i\in[0,T]$.  Besides the error metrics, the energy difference $\nn{E(T)-E_0}$ ($E_0 = E(t_0)$) is reported as well as the online computational time in seconds necessary for integrating each model.  This collection of data provides a rough measure of model quality which is used to draw conclusions about ROM performance.  The experiments chosen are a low-dimensional example motivated by gas kinetics and an infinite-dimensional example coming from elasticity theory.  In both cases, the initial conditions are parameterized and used to train the mapping $\uu$, and the ROMs are used to predict unseen solutions given relevant initial data.  In all cases, the SciPy \cite{scipy2020} interface to LSODA \cite{hindmarsh1983} is used to integrate the resulting ODE systems with an error tolerance of $10^{-14}$.  All experiments are carried out on a 2022 M1 MacBook Pro with 32GB of RAM.

\begin{remark}
As computational efficiency is not emphasized in this work, all ROMs in these examples are implemented sub-optimally compared to the FOM, which interfaces directly with optimized SciPy routines.  This causes the ROMs to appear slower relative to the FOM than they would likely be in practice, and occasionally even slower than the FOM itself.  
\end{remark}

\subsection{Two gas containers exchanging heat and volume}
To support the claims proven in Theorem~\ref{thm:conv} and Theorem~\ref{thm:ROM}, it is useful to consider the following low-dimensional example from \cite{shang2020}.  Two gas containers are allowed to exchange heat and volume on either side of a wall. The state variable is $\xx = \begin{pmatrix} q & p & S_1 & S_2 \end{pmatrix}^\intercal \in \mathbb{R}^4$ where $q,p$ are the position resp. momentum of the wall and $S_1,S_2$ are the entropy of the gases in the respective containers.  The entropy function is then $S = S_1 + S_2$ and the energy function is 
\[ E(\xx) = \frac{p^2}{2m} + E_1 + E_2 := \frac{p^2}{2m} + \left(\frac{e^\frac{S_1}{(N k_B)}}{\hat{c}\,q}\right)^\frac{2}{3} + \left(\frac{e^\frac{S_2}{(N k_B)}}{\hat{c}\,(2-q)}\right)^\frac{2}{3}, \quad \hat{c} = \left(\frac{4\pi m^2}{3h^2N}\right)^\frac{3}{2}\frac{e^\frac{5}{2}}{N},\]
where $m$ is the mass of the wall, $N$ is the number of gas particles, $h$ is the Planck constant and $k_B$ is the Boltzmann constant.  For the rest of the discussion, a normalization is assumed such that $m=Nk_B=\hat{c}=1$.  This system then has the metriplectic form
\begin{align*}
    \begin{pmatrix}
    \dot{q} \\ \dot{p} \\ \dot{S}_1 \\ \dot{S}_2
    \end{pmatrix}
    = 
    \begin{pmatrix}
    p \\
    \frac{2}{3}\left(\frac{E_1}{q}-\frac{E_2}{2-q}\right) \\ \frac{\gamma}{T_1}\left(\frac{1}{T_1}-\frac{1}{T_2}\right) \\ \frac{-\gamma}{T_2}\left(\frac{1}{T_1}-\frac{1}{T_2}\right)
    \end{pmatrix}
    = \LL\nabla E(\xx) + \MM(\xx)\nabla S,
\end{align*}
where $T_i = \partial_{S_i}E_i = (2/3)E_i$, $\nabla S = \begin{pmatrix} 0 & 0 & 1 & 1
\end{pmatrix}^\top$, $\gamma$ regulates the degree of heat transfer, and 
\begin{equation*}
    \LL = 
    \begin{pmatrix}
    0 & 1 & 0 & 0 \\
    -1 & 0 & 0 & 0 \\
    0 & 0 & 0 & 0 \\
    0 & 0 & 0 & 0
    \end{pmatrix},
    \quad \MM = \gamma
    \begin{pmatrix}
    0 & 0 & 0 & 0 \\
    0 & 0 & 0 & 0 \\
    0 & 0 & T_1^{-2} & -(T_1T_2)^{-1} \\
    0 & 0 & -(T_1T_2)^{-1} & T_2^{-2},
    \end{pmatrix},
\end{equation*}
\begin{equation*}
   \nabla E = \frac{2}{3}
    \begin{pmatrix}
    -\left(\frac{e^{2S_1}}{q^5}\right)^{\frac{1}{3}} + \left(\frac{e^{2S_2}}{(2-q)^5}\right)^{\frac{1}{3}} \\
    (3/2)\,p \\
    E_1 \\
    E_2
    \end{pmatrix}.
\end{equation*}

\begin{figure}
    \centering
    \includegraphics[width=\textwidth]{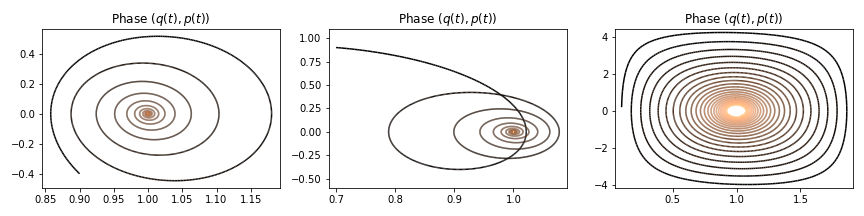}
    \caption{The phase portraits of three qualitatively different solutions to the gas container problem with corresponding initial conditions $\begin{pmatrix}0.9&-0.4&2.4&2.0\end{pmatrix}^\intercal$ (left), $\begin{pmatrix}0.7&0.9&1.1&2.9\end{pmatrix}^\intercal$ (middle), and $\begin{pmatrix}0.1&0.2&1.6&1.8\end{pmatrix}^\intercal$ (right).}
    \label{fig:gasFOM}
\end{figure}


Note that the matrix $\MM$ decomposes (nonuniquely) as
\[\MM=\bb{m}\otimes\bb{m},\qquad \bb{m} = \sqrt{\gamma}\begin{pmatrix} 0 & 0 & T_1^{-1} & -T_2^{-1}\end{pmatrix}^\intercal,\] 
so that by choosing indices $k_0,k_1$ such that $E^{k_0}\neq 0$ and $S^{k_1}\neq 0$ the tensors $\bm{\zeta},\bm{\xi}$ can be computed following Proposition~\ref{prop:tensorcomp}.  The choice $k_0=k_1=3$ yields the reduced-order objects
\begin{align*}
    \xih = \Lb\wedge\uu^{3}, \qquad \zh\lr{\xt} = \hat{\Am}\lr{\xt}\otimes\hat{\Am}\lr{\xt}, \qquad \hat{\Am}\lr{\xt} = \frac{3}{2}\frac{\ut\bb{m}\lr{\xt}}{E_1\lr{\xt}} \wedge \uu^3,
\end{align*}
making it clear that $\xih$ can be precomputed while some components of $\zh$ must be computed online.  For the present experiment, ROM performance is compared at various levels of compression when integrated over two different lengths of time, one of which extends away from the training regime.  The initial state $\xx_0$ is assumed to lie in $[0.08,1.8]\times[-1,1]\times[1,3]\times[1,3]$, and snapshots from 25 simulations with $\gamma=8$ and random, uniformly distributed initial data are used to train the POD approximation $\uu$.  Some representative solutions are displayed in Figure~\ref{fig:gasFOM}.  Snapshots of the shifted solution $\xx-\xx_0$ as well as the gradient $\nabla E\lr{\xx}$ are saved every 0.02 time instants in the interval $[0,8]$ during training and concatenated to form the snapshot matrix $\bb{Y}$.  As $\nabla S$ is a constant independent of $\bb{x}$, it is included only once as the final column of the snapshot matrix.   

The performance of each ROM is tested using the solution with corresponding initial condition $\xx_0 = \begin{pmatrix}0.9 & -0.4 & 2.4 & 2.0\end{pmatrix}^\intercal$ not included in the training set.  Table~\ref{tab:gas} compares the errors that arise when integrating to $T=8$ where snapshots end, as well as when integrating to $T=32$ which extends well past this point. Figures~\ref{fig:8gas234} and \ref{fig:32gas234} display the FOM and ROM positions $q$, entropies $S$, and energy variations $E-E_0$ over each time interval when $n=2,3,4$.  Notice that all methods converge with refinement as expected, but only the proposed SP-ROM is capable of preserving the initial energy (to order $10^{-13})$ as well as producing a reasonable entropy profile regardless of the reduced dimension.  It is also interesting to note that the simple G-ROM can produce quite unphysical results in terms of its energy and entropy profiles, including the rapid energy increase seen in Figure~\ref{fig:32gas234}.  This shows that there are clear benefits to preserving the metriplectic structure of the original system, especially in the presence of downstream procedures which rely on accurate estimates of such quantities.

\begin{table}[]\label{tab:gas}
\begin{center}
\resizebox{\columnwidth}{!}{%
\def\arraystretch{1.25}%
\begin{tabular}{|c|c|ccccc|c|c|ccccc|}
\hline
$T$ & $n$ & Method & $\mathcal{E}_r$ \% & $\mathcal{E}_\infty$ & $\nn{E(T)-E_0}$ & Time (s) & $T$ & $n$ & Method & $\mathcal{E}_r$ \% & $\mathcal{E}_\infty$ & $\nn{E(T)-E_0}$ & Time (s) \\ \hline
\multirow{10}{*}{8} & - & FOM & - & - & $5.507\times 10^{-14}$ & 0.07517 & \multirow{10}{*}{32} & - & FOM & - & - & $1.279\times 10^{-13}$ & 0.2323 \\ \cline{2-7} \cline{9-14}
& \multirow{3}{*}{2} & SP-ROM & 15.84 & 0.9166 & $1.066\times 10^{-13}$ & 0.01457 & & \multirow{3}{*}{2} & SP-ROM & 14.71 & 0.9166 & $1.243\times 10^{-13}$ & 0.02071 \\
& & EH-ROM & 16.00 & 0.9162 & 0.3127 & 0.01631 & & & EH-ROM & 15.29 & 0.9162 & $3.357\times 10^{-3}$ & 0.02199 \\
& & G-ROM & 58.96 & 2.436 & 46.29 & 0.02356 & & & G-ROM & 182.9 & 7.188 & 965.5 & 0.04956 \\ \cline{2-7} \cline{9-14}
& \multirow{3}{*}{3} & SP-ROM & 7.462 & 0.2338 & $5.861\times 10^{-14}$ & 0.07991 & & \multirow{3}{*}{3} & SP-ROM & 7.367 & 0.2338 & $1.279\times 10^{-13}$ & 0.2262 \\
& & EH-ROM & 7.303 & 0.2246 & 0.4692 & 0.06545 & & & EH-ROM & 9.050 & 0.3677 & 1.378 & 0.1906 \\
& & G-ROM & 6.808 & 0.2244 & 0.4364 & 0.06068 & & & G-ROM & 8.971 & 0.3754 & 1.422 & 0.1843 \\ \cline{2-7} \cline{9-14}
& \multirow{3}{*}{4} & SP-ROM & $3.968\times 10^{-12}$ & $3.465\times 10^{-13}$ & $4.619\times 10^{-14}$ & 0.07635 & & \multirow{3}{*}{4} & SP-ROM & $1.005\times 10^{-11}$ & $6.611\times 10^{-13}$ & $2.025\times 10^{-13}$ & 0.1734 \\
& & EH-ROM & $5.252\times 10^{-12}$ & $3.988\times 10^{-13}$ & $1.385\times 10^{-13}$ & 0.07155 & & & EH-ROM & $1.099\times 10^{-11}$ & $8.299\times 10^{-13}$ & $1.421\times 10^{-13}$ & 0.1683 \\
& & G-ROM & $6.769\times 10^{-12}$ & $3.934\times 10^{-13}$ & $2.345\times 10^{-13}$ & 0.05992 & & & G-ROM & $1.062\times 10^{-11}$ & $6.225\times 10^{-13}$ & $2.984\times 10^{-13}$ & 0.1725 \\ \hline
\end{tabular}%
}
\end{center}
\medskip
\caption{Results of the gas container experiment.  Dashes indicate ``not applicable'' to the case of the FOM.}
\end{table}

\begin{figure}
    \centering
    \includegraphics[width=\textwidth]{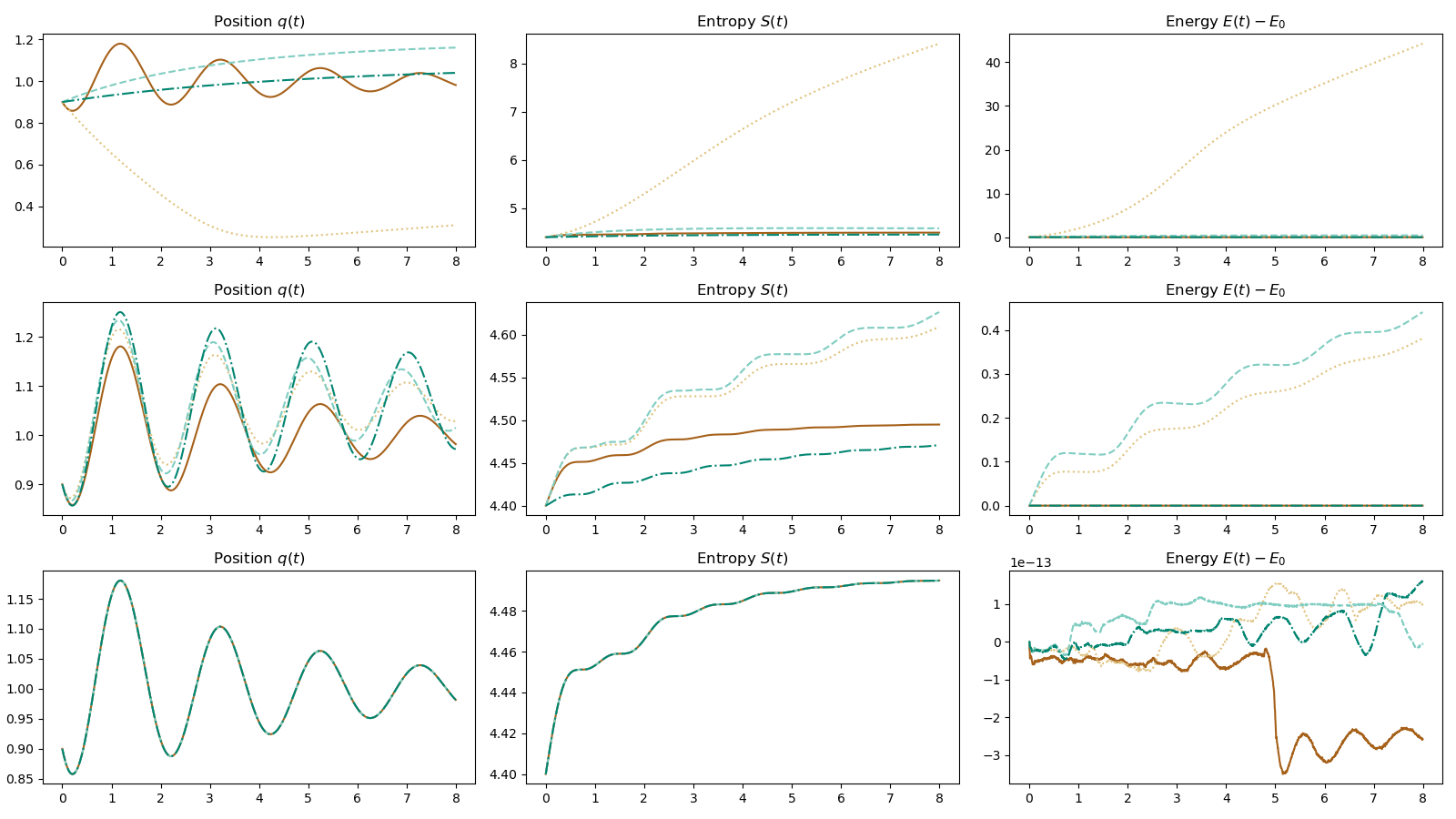}
    \caption{A comparison of ROM solutions for the 4-dimensional gas container example when $T=8$ and $n=2,3,4$, respectively.  Plotted are the \textcolor{exact}{Exact Solution (solid)}, \textcolor{no}{G-ROM (dotted)}, \textcolor{old}{EH-ROM (dashed)}, and \textcolor{new}{SP-ROM (dot-dashed)}.  Observe the convergence as established in Theorem~\ref{thm:conv}.}
    \label{fig:8gas234}
\end{figure}


\begin{figure}
    \centering
    \includegraphics[width=\textwidth]{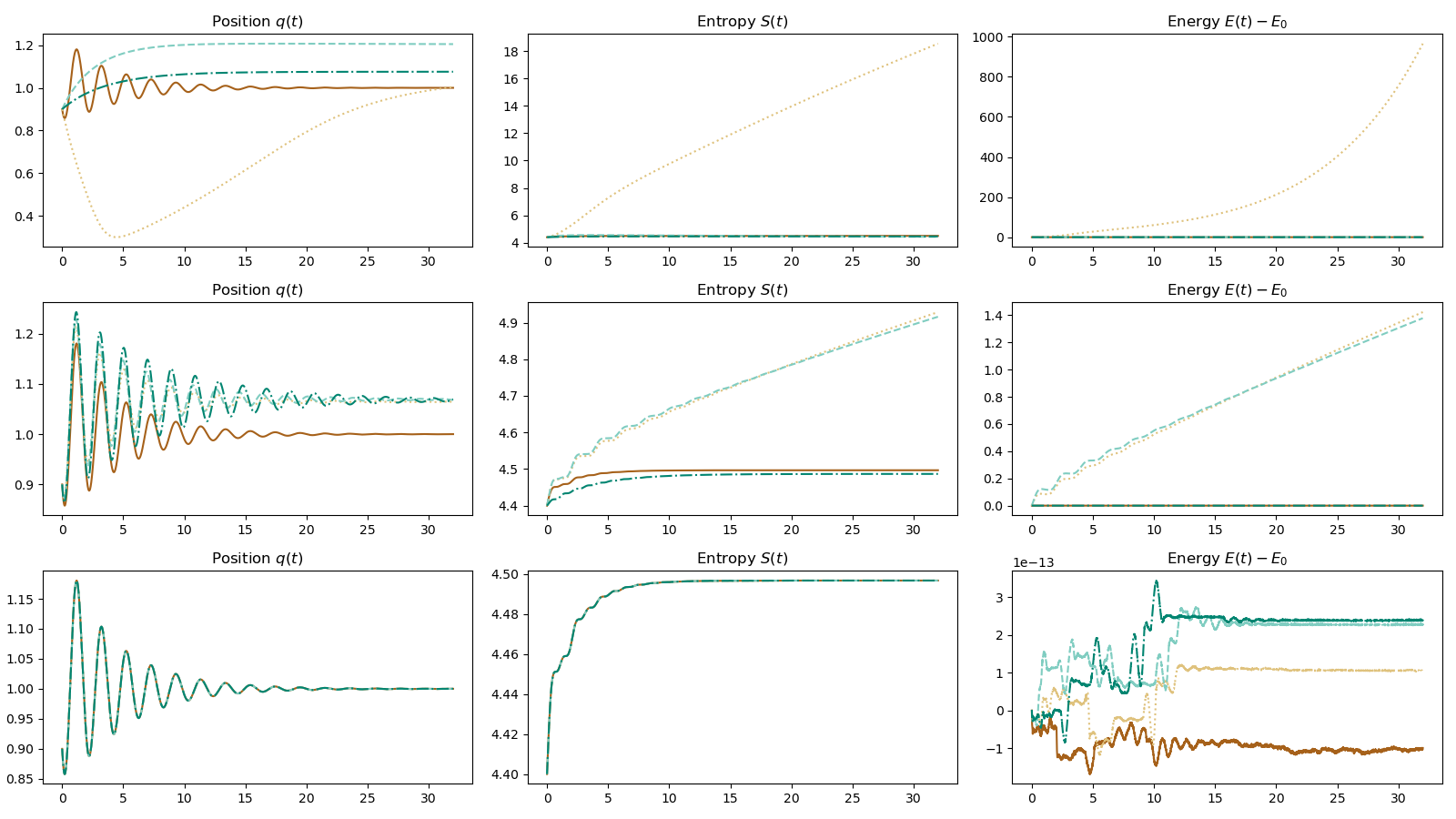}
    \caption{A comparison of ROM solutions for the 4-dimensional gas container example when $T=32$ and $n=2,3,4$, respectively.  Plotted are the \textcolor{exact}{Exact Solution (solid)}, \textcolor{no}{G-ROM (dotted)}, \textcolor{old}{EH-ROM (dashed)}, and \textcolor{new}{SP-ROM (dot-dashed)}.  Note that only the \textcolor{new}{SP-ROM} produces reasonable energy and entropy results.}
    \label{fig:32gas234}
\end{figure}

\subsection{A damped thermoelastic rod}\label{ex:big}
More useful from a ROM perspective is when the systems under consideration represent discretizations of infinite-dimensional metriplectic dynamics.  Consider an infinite-dimensional example of the system in \cite[Section 3.1]{mielke2011} (specialized in \cite{kraaij2020}), where a 1-D elastic rod with coordinate $s\in[0,\ell]$ evolves as a damped Hamiltonian system with friction.  The dynamics of this motion are governed by the metriplectic system
\[ 
\begin{pmatrix}
\dot{q} \\ \dot{p} \\ \dot{S}
\end{pmatrix}
=
\begin{pmatrix}
\frac{p}{m} \\ V'(q) - \gamma\frac{p}{m} \\ \gamma\frac{p^2}{m^2}
\end{pmatrix}
=
\LL\nabla E(q,p,e) + \MM(q,p,e)\nabla S,
\]
where $V$ is a given potential function, $\gamma$ is a constant controlling the rate of dissipation, $\nabla S,\nabla E$ now denote $L^2$-gradients, $m=m(s)$ is the mass density, $q=q(s)$ is the position, $p=p(s)$ is the momentum, and $e=e(s)$ is the internal energy representing the conversion of mechanical energy into heat.  Explicitly, the state of the system can be described by $\xx(s) = \begin{pmatrix} q(s) & p(s) & S(s) \end{pmatrix}^\intercal$ where the functions $E$ and $S$ are given by
\[ E(p,q,e) = H(p,q) + S(e) = \int_0^\ell \lr{\frac{p(s)^2}{2m(s)} + V\lr{q(s)}} + \int_0^\ell e(s), \]
and where $H(p,q)$ is the Hamiltonian function for the system.  Notice that $H$ is no longer a conserved quantity but has been replaced by $E$ which balances energetic and entropic contributions.  A simple calculation using the definition $dF(v) = \lr{\nabla F,v}_{L^2}$ yields the gradients and operators which describe these metriplectic dynamics: it follows that $\nabla S = \begin{pmatrix}0 & 0 & 1\end{pmatrix}^\top$,
\begin{equation*}
    \LL = 
    \begin{pmatrix}
    0 & 1 & 0 \\
    -1 & 0 & 0 \\
    0 & 0 & 0
    \end{pmatrix},\quad
    \MM = \gamma
    \begin{pmatrix}
    0 & 0 & 0 \\
    0 & 1 & -\frac{p}{m} \\
    0 & -\frac{p}{m} & \left(\frac{p}{m}\right)^2
    \end{pmatrix},\quad
    \nabla E = 
    \begin{pmatrix}
    V'(q) \\
    \frac{p}{m} \\
    1
    \end{pmatrix}.
\end{equation*}

\begin{figure}
    \centering
    \begin{minipage}{0.95\textwidth}
    \includegraphics[width=\textwidth]{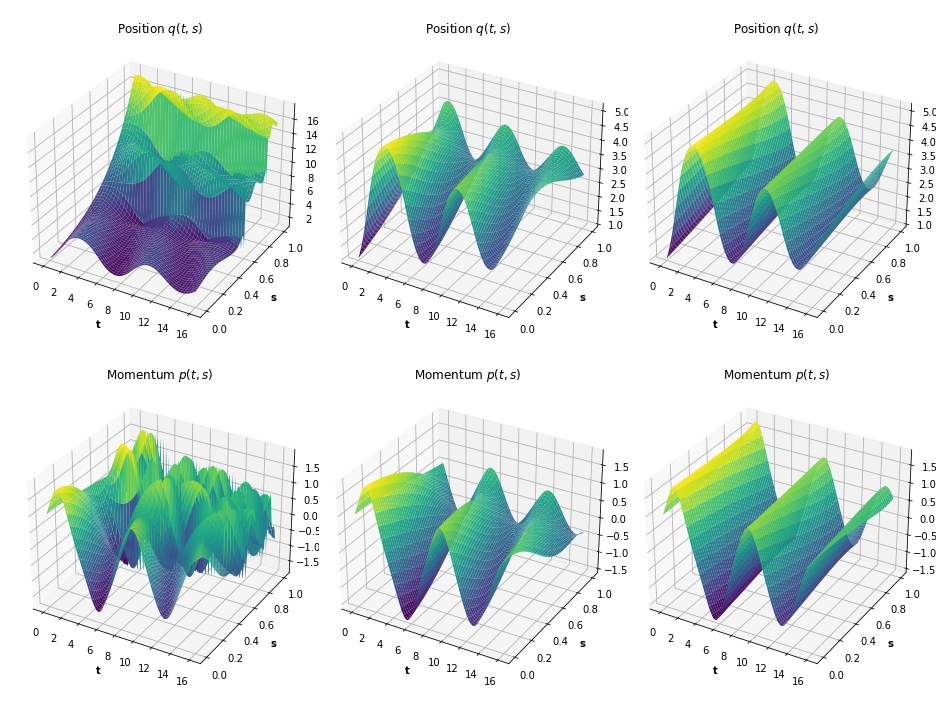}
    \end{minipage}
    \caption{The position $q(t,s)$ and momentum $p(t,s)$ functions of three qualitatively different solutions to the thermoelastic rod problem contained in the training set.}
    \label{fig:nbodiesFOM}
\end{figure}

Modeling this evolution requires an appropriate discretization of the continuous system above.  Consider a semi-discretization in the rod parameter with constant mass density $m$, so that $s$ is represented by the vector $\bb{s}\in\mathbb{R}^N$ where $N$ is the number of discretization points.  Then, the discretized state (also denoted $\xx$) becomes a $(2N+1)$-vector which evolves according to 
\[ 
\begin{pmatrix}
\dot{\qq} \\ \dot{\pp} \\ \dot{S}
\end{pmatrix}
=
\begin{pmatrix}
\frac{\pp}{m} \\ \bb{V}'(\qq) - \gamma\frac{\pp}{m} \\ \gamma\frac{\nn{\pp}^2}{m^2}
\end{pmatrix}
=
\LL\nabla E(\xx) + \MM(\xx)\nabla S,
\]
where $\nabla S = \begin{pmatrix}\bb{0} & \bb{0} & 1\end{pmatrix}^\intercal$,
\begin{equation*}
    \LL = 
    \begin{pmatrix}
    \bb{0}_{N\times N} & \bb{I} & \bb{0} \\
    -\bb{I} & \bb{0}_{N\times N} & \bb{0} \\
    \bb{0} & \bb{0} & 0
    \end{pmatrix},\quad
    \MM(\xx) = \gamma
    \begin{pmatrix}
    \bb{0}_{N\times N} & \bb{0}_{N\times N} & \bb{0} \\
    \bb{0}_{N\times N} & \bb{I} & -\frac{\pp}{m} \\
    \bb{0} & -\frac{\pp^\intercal}{m} & \left(\frac{\nn{\pp}}{m}\right)^2
    \end{pmatrix},\quad
    \nabla E(\xx) = 
    \begin{pmatrix}
    \bb{V}'(\qq) \\
    \frac{\pp}{m} \\
    1
    \end{pmatrix}.
\end{equation*}
Notice that $S^{2N+1}=E^{2N+1}=1\neq 0$ and $\MM$ decomposes as
\[ \MM = \sum_{\alpha=1}^N\bb{m}^\alpha\otimes\bb{m}^\alpha, \qquad \bb{m}^\alpha = \sqrt{\gamma}\begin{pmatrix}\bm{0} & \bb{e}_\alpha & -\frac{p_\alpha}{m} \end{pmatrix}^\intercal, \]
so that $\xih, \zh$ can be computed as
\begin{align*}
    \xih = \Lb\wedge\uu^{2N+1}, \qquad \zh\lr{\xt} = \sum_{\alpha=1}^N \Am^\alpha\lr{\xt}\otimes\Am^\alpha\lr{\xt}, \qquad \Am^\alpha\lr{\xt} = \ut\bb{m}^\alpha\lr{\xt} \wedge \uu^{2N+1}.
\end{align*}
Moreover, although $\MM$ depends on the full-order state $\xt\in\mathbb{R}^{2N+1}$, each $\bb{m}^\alpha$ is linear in the solution and so the online cost can be made independent of this number (although it will still depend on $N$, the number of terms in the sum).  More precisely, notice that 
\[\MM = \bb{C}\bb{C}^\intercal,\qquad \bb{C} = \sqrt{\gamma}\begin{pmatrix}\bm{0}_{N\times N} & \bm{I} & -\frac{\pp}{m},\end{pmatrix}^\intercal\]
so that $\bb{C}\lr{\xt}\in\mathbb{R}^{(2N+1)\times N}$ can be written as 
\[ \bb{C}\lr{\xt} = \bb{C}_0 + \bb{C}_1\lr{\xh} = \sqrt{\gamma}\begin{pmatrix}\bm{0}_{N\times N} \\ \bm{I} \\ -\frac{\pp^\intercal}{m}\end{pmatrix} + \frac{\sqrt{\gamma}}{m}\begin{pmatrix}\bm{0}_{N\times N} \\ \bm{0}_{N\times N} \\ -\lr{\uu^{N:2N}\xh}^\intercal\end{pmatrix}, \]
where $\uu^{N:2N}$ indicates the $N\times n$ matrix formed from rows $N$ to $2N$ of $\uu$.  It follows that the reduced object
\[\ut\bb{C}\lr{\xt} = \ut\bb{C}_0 + \ut\bb{C}_1\lr{\xh} = \ut\bb{C}_0 - \frac{\sqrt{\gamma}}{m}\uu^{2N+1}\otimes \uu^{N:2N}\xh,\] contains all $\ut\bb{m}^\alpha$ in its columns and requires only multiplication by $\xh$ online.  This optimization has been used throughout this example on the FOM (where $\pp = \pp_0 + (\pp-\pp_0)$) as well as all ROMs.  In addition, all simulations use the potential $V(q) = \cos q$ along with constants $\gamma=8$, $\ell=1$ and $N=250$.

The performance of each ROM in this case is evaluated using a family of trajectories with parameterized initial conditions.  More precisely, it is assumed that $\bb{x}_0$ satisfies the initial position and momentum conditions
\begin{align*}
    \bb{q}_0(\bb{s}) &= e^{\mu_1\bb{s}}, \qquad \mu_1 \in [-0.2,5.2], \\
    \bb{p}_0(\bb{s}) &= \frac{1}{1+\mu_2\bb{s}^2}, \qquad \mu_2\in[-1,1],
\end{align*}
along with some initial entropy $S_0 \in [1,3]$. Some representative solutions to the thermoelastic rod system with this type of initial data can be found in Figure~\ref{fig:nbodiesFOM}.  As in the previous example, 25 uniformly random instances of $\begin{pmatrix}\mu_1&\mu_2&S_0\end{pmatrix}^\intercal$ are drawn and used to form the initial conditions used for training the POD map $\uu$.  Note that the snapshot matrix $\bb{Y}$ includes snapshots of the shifted solution $\xx-\xx_0$ as well as the gradients $\nabla E(\xx)$ collected from the interval $[0,8]$ in $t$-increments of $0.02$. Again, the constant vector $\nabla S$ is included as the last column of $\bb{Y}$.  It is interesting to note that the first eigenvalue of the snapshot matrix is much larger than the rest, but the remaining eigenvalues decay relatively slowly until roughly $n=350$ (see Figure~\ref{fig:evals}).  This indicates that these dynamics cannot be captured well with only a few linear POD basis functions.

\begin{figure}
    \centering
    \includegraphics[width=\textwidth]{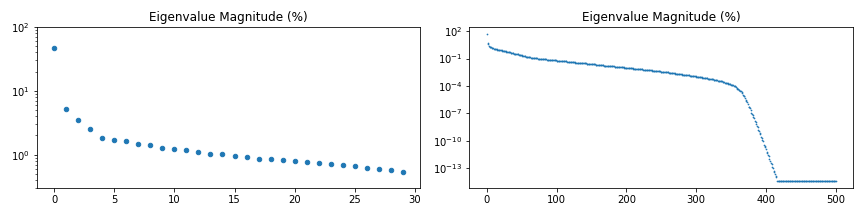}
    \caption{Eigenvalue plots corresponding to the snapshot matrix in the thermoelastic rod example.  The $y$-axis displays the magnitude of each eigenvalue as a percentage of the total sum.}
    \label{fig:evals}
\end{figure}

After training $\uu$, the ROMs are evaluated online using the initial data corresponding to the parameter $\begin{pmatrix}0.65&-0.1&1.9\end{pmatrix}^\intercal$ not included in the training set.  The results of this procedure are tabulated in Table~\ref{tab:nbody} and illustrated in Figures~\ref{fig:nbodies16} and \ref{fig:nbodies48}.  Clearly, the greatest advantage of the metriplectic SP-ROM is its ability to preserve the energy conservation and entropy growth of the original system independently of $n$, leading to much greater accuracy and stability over time.  Conversely, there appears to be little advantage to employing the more expensive SP-ROM over the cheaper, matrix-oriented EH-ROM when the integration takes place over small times, as there is not enough accumulation from the violation of the compatibility conditions \eqref{eq:compatibility} to significantly harm model performance.  Note that the simple and straightforward G-ROM is unstable and inferior in almost every case.  Since it has no knowledge of the internal structure of the system, it cannot accurately infer the original metriplectic dynamics.

\begin{table}
\begin{center}\label{tab:nbody}
\resizebox{\columnwidth}{!}{%
\def\arraystretch{1.25}%
\begin{tabular}{|c|c|ccccc|c|c|ccccc|}
\hline
$T$ & $n$ & Method & $\mathcal{E}_r$ \% & $\mathcal{E}_\infty$ & $\nn{E(T)-E_0}$ & Time (s) & $T$ & $n$ & Method & $\mathcal{E}_r$ \% & $\mathcal{E}_\infty$ & $\nn{E(T)-E_0}$ & Time (s) \\ \hline
\multirow{10}{*}{8} & - & FOM & - & - & $1.918\times 10^{-11}$ & 0.1614 & \multirow{10}{*}{16} & - & FOM & - & - & $1.651\times 10^{-11}$ & 0.2766 \\ \cline{2-7} \cline{9-14}
& \multirow{3}{*}{10} & SP-ROM & 9.015 & 22.28 & $2.814\times 10^{-12}$ & 0.1965 & & \multirow{3}{*}{10} & SP-ROM & 8.166 & 26.49 & $1.648\times 10^{-12}$ & 0.4123 \\
& & EH-ROM & 4.896 & 7.980 & 6.687 & 0.06249 & & & EH-ROM & 4.134 & 7.980 & 36.43 & 0.1039 \\
& & G-ROM & 8.336 & 21.65 & 16.99 & 0.1091 & & & G-ROM & 8.394 & 21.65 & 59.49 & 0.1975 \\ \cline{2-7} \cline{9-14}
& \multirow{3}{*}{20} & SP-ROM & 4.020 & 7.938 & $1.108\times 10^{-12}$ & 0.1762 & & \multirow{3}{*}{20} & SP-ROM & 3.565 & 10.71 & $5.571\times 10^{-12}$ & 0.4895 \\
& & EH-ROM & 3.625 & 5.708 & 5.686 & 0.05898 & & & EH-ROM & 2.881 & 5.708 & 21.06 & 0.1107 \\
& & G-ROM & 4.864 & 15.49 & 9.108 & 0.1186 & & & G-ROM & 5.304 & 15.49 & 61.26 & 0.1948 \\ \cline{2-7} \cline{9-14}
& \multirow{3}{*}{40} & SP-ROM & 0.8734 & 0.5371 & $4.121\times 10^{-12}$ & 0.2523 & & \multirow{3}{*}{40} & SP-ROM & 0.9430 & $1.509$ & $4.234\times 10^{-12}$ & 0.4403 \\
& & EH-ROM & 0.8767 & 0.5469 & 0.8563 & 0.07977 & & &  EH-ROM & 0.9339 & 1.694 & 4.457 & 0.1378 \\
& & G-ROM & 1.001 & 0.4779 & 1.555 & 0.1339 & & & G-ROM & 1.207 & 1.029 & 9.142 & 0.2258 \\ \hline
\multirow{10}{*}{48} & - & FOM & - & - & $1.253\times 10^{-11}$ & 0.6041 & \multirow{10}{*}{96} & - & FOM & - & - & $3.439\times 10^{-12}$ & 1.003 \\ \cline{2-7} \cline{9-14}
& \multirow{3}{*}{10} & SP-ROM & 9.197 & 42.97 & $1.708\times 10^{-11}$ & 0.6814 & & \multirow{3}{*}{10} & SP-ROM & 10.11 & 44.97 & $8.413\times 10^{-12}$ & 0.9792 \\
& & EH-ROM & 18.20 & 154.0 & 280.6 & 0.2015 & & & EH-ROM & 92.49 & 846.9 & 1082 & 0.4351 \\
& & G-ROM & 15.20 & 122.6 & 297.3 & 0.3931 & & & G-ROM & 72.28 & 787.7 & 1190 & 0.7518 \\ \cline{2-7} \cline{9-14}
& \multirow{3}{*}{20} & SP-ROM & 4.456 & 21.09 & $2.643\times 10^{-12}$ & 0.6928 & & \multirow{3}{*}{20} & SP-ROM & 5.013 & 6.056 & $8.811\times 10^{-13}$ & 0.8788 \\
& & EH-ROM & 9.079 & 68.68 & 103.6 & 0.2628 & & & EH-ROM & 29.25 & 252.3 & 320.5 & 0.4126 \\
& & G-ROM & 107.3 & 1498 & 3051 & 0.5186 & & & G-ROM & - & - & - & -\\ \cline{2-7} \cline{9-14}
& \multirow{3}{*}{40} & SP-ROM & 1.302 & 5.538 & $4.832\times 10^{-13}$ & 0.6735 & & \multirow{3}{*}{40} & SP-ROM & 1.495 & 6.056 & $2.842\times 10^{-12}$ & 1.427 \\
& & EH-ROM & 2.063 & 14.42 & 21.07 & 0.2682 & & & EH-ROM & 5.369 & 41.49 & 49.38 & 0.5114 \\
& & G-ROM & 3.908 & 32.51 & 67.35 & 0.5442 & & & G-ROM & 18.69 & 168.8 & 240.3 & 0.9306 \\ \hline
\end{tabular}%
}
\end{center}
\medskip
\caption{Results of the thermoelastic rod experiment.  Dashes indicate ``not applicable'' when reporting the FOM, and lack of convergence when reporting the ROMs.}
\end{table}

\begin{figure}
    \centering
    \includegraphics[width=\textwidth]{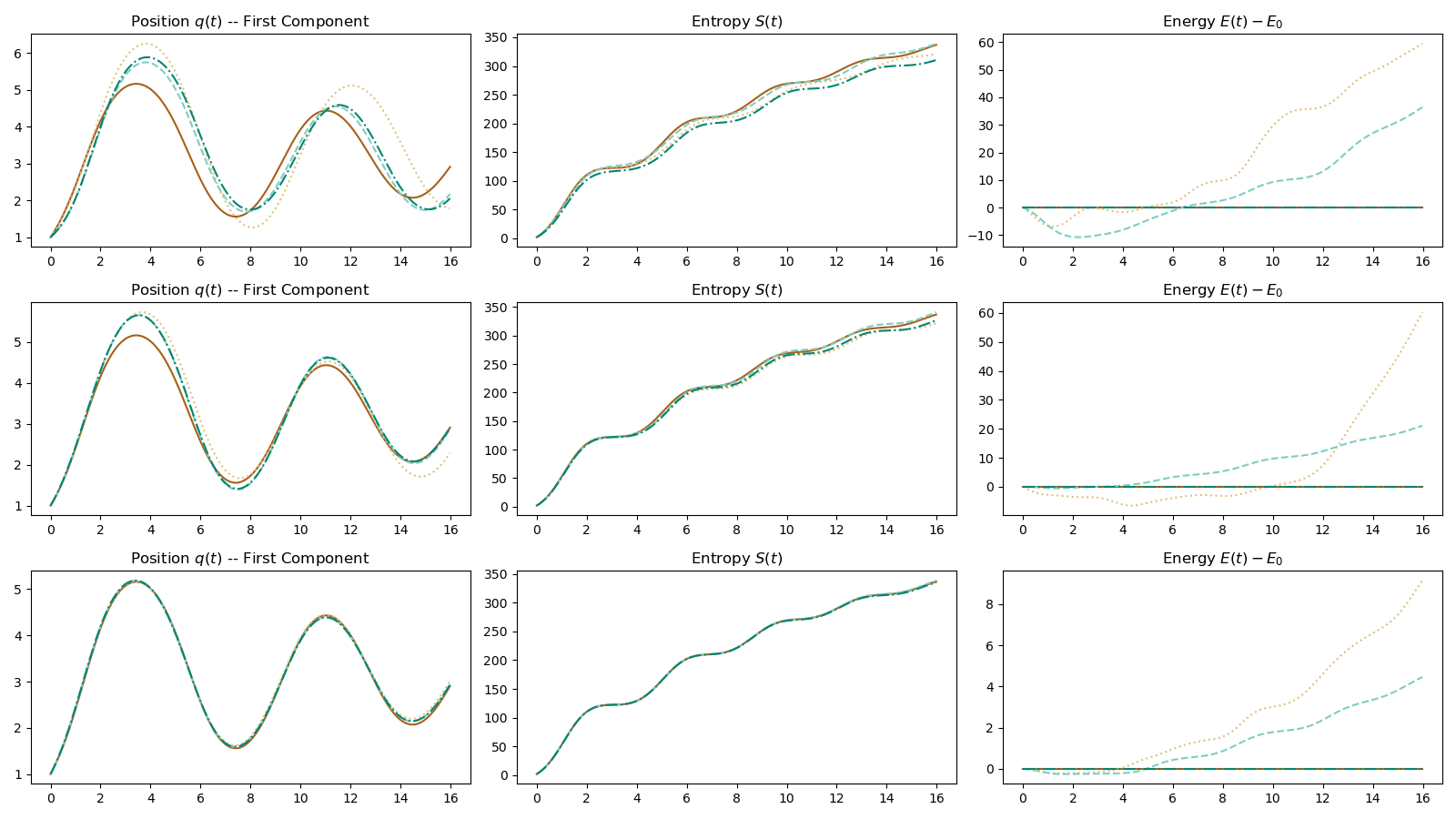}
    \caption{A comparison of ROM solutions for the 501-dimensional thermoelastic rod example when $T=16$ and $n=10,20,40$, respectively.  Plotted are the \textcolor{exact}{Exact Solution (solid)}, \textcolor{no}{G-ROM (dotted)}, \textcolor{old}{EH-ROM (dashed)}, and \textcolor{new}{SP-ROM (dot-dashed)}.  Observe the convergence as established in Theorem~\ref{thm:conv}.}
    \label{fig:nbodies16}
\end{figure}

\begin{figure}
    \centering
    \includegraphics[width=\textwidth]{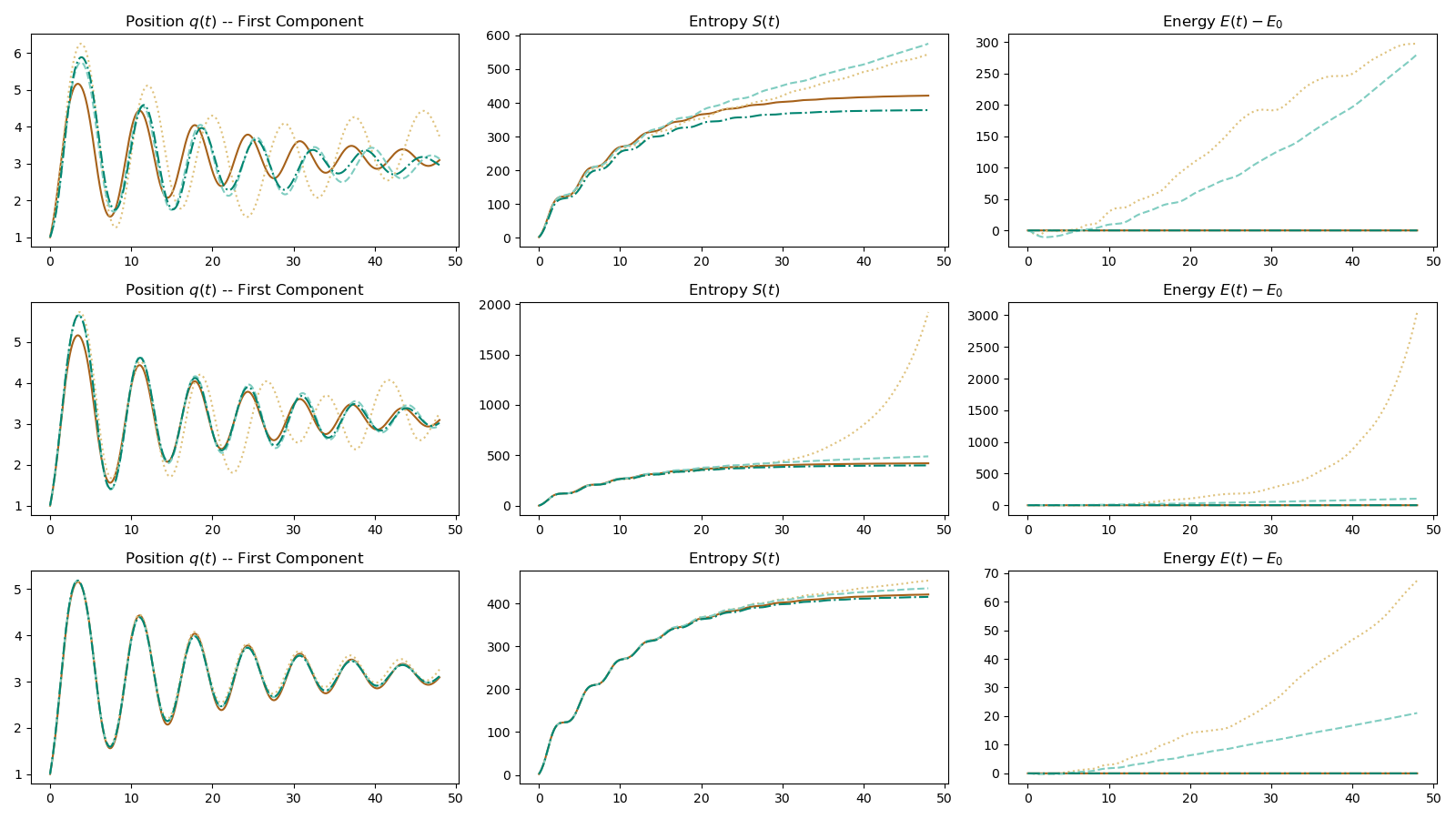}
    \caption{A comparison of ROM solutions for the 501-dimensional thermoelastic rod example when $T=48$ and $n=10,20,40$, respectively.  Plotted are the \textcolor{exact}{Exact Solution (solid)}, \textcolor{no}{G-ROM (dotted)}, \textcolor{old}{EH-ROM (dashed)}, and \textcolor{new}{SP-ROM (dot-dashed)}.  Note that only the \textcolor{new}{SP-ROM} produces reasonable energy and entropy results.}
    \label{fig:nbodies48}
\end{figure}

It is also useful to investigate the long-term stability of these ROMs.  As the EH-ROM and the G-ROM are not truly metriplectic, it is expected that their performance will decay as the interval of integration moves far away from the training data.  This is tested using the same experiment as above by varying the right endpoint of the temporal integration (recall that $\uu$ is trained only on snapshots coming from the interval $[0,8]$). Table~\ref{tab:stability} shows the results of integrating over ranges $[0,T]$ where $T=2^k, 3\leq k\leq 9$.  As expected, the metriplectic SP-ROM is quite stable, while the others eventually break down.  It is interesting that the naive G-ROM is unpredictable, exhibiting better stability when $n=10$ than when $n=20$.  


\begin{table}[]\small
\begin{center}
\begin{tabular}{|ccc|ccccccl|}
\hline
\multicolumn{3}{|c|}{\multirow{2}{*}{$\mathcal{E}_r$ \%}} & \multicolumn{7}{c|}{$T$} \\ \cline{4-10} 
\multicolumn{3}{|c|}{} & \multicolumn{1}{c|}{8} & \multicolumn{1}{c|}{16} & \multicolumn{1}{c|}{32} & \multicolumn{1}{c|}{64} & \multicolumn{1}{c|}{128} & \multicolumn{1}{c|}{256} & 512 \\ \hline
\multicolumn{1}{|c|}{\multirow{9}{*}{$n$}} & \multicolumn{1}{c|}{\multirow{3}{*}{5}} & SP-ROM & 19.28 & 17.96 & 17.18 & 17.44 & 17.55 & 17.36 & 17.18 \\
\multicolumn{1}{|c|}{} & \multicolumn{1}{c|}{} & EH-ROM & 11.36 & 10.02 & 12.34 & 668.0 & - & - & - \\
\multicolumn{1}{|c|}{} & \multicolumn{1}{c|}{} & G-ROM & 10.21 & 11.69 & 12.88 & 70.01 & 322.4 & - & - \\ \cline{2-10} 
\multicolumn{1}{|c|}{} & \multicolumn{1}{c|}{\multirow{3}{*}{10}} & SP-ROM & 9.015 & 8.166 & 8.425 & 9.672 & 10.29 & 10.51 & 10.59 \\
\multicolumn{1}{|c|}{} & \multicolumn{1}{c|}{} & EH-ROM & 4.896 & 4.134 & 7.815 & 34.64 & 160.6 & - & - \\
\multicolumn{1}{|c|}{} & \multicolumn{1}{c|}{} & G-ROM & 8.336 & 8.394 & 8.409 & 27.23 & 509.0 & - & - \\ \cline{2-10} 
\multicolumn{1}{|c|}{} & \multicolumn{1}{c|}{\multirow{3}{*}{20}} & SP-ROM & 4.020 & 3.565 & 3.952 & 4.750 & 5.118 & 5.231 & 5.262 \\
\multicolumn{1}{|c|}{} & \multicolumn{1}{c|}{} & EH-ROM & 3.624 & 2.881 & 4.858 & 14.54 & 51.36 & 922.8 & - \\
\multicolumn{1}{|c|}{} & \multicolumn{1}{c|}{} & G-ROM & 4.864 & 5.304 & 10.77 & - & - & - & - \\ \hline
\end{tabular}\label{tab:stability}
\end{center}
\medskip
\caption{Long-time integration results for the thermoelastic rod example.  Values are the relative error in percentage form, and dashes indicate when the solver does not converge.}
\end{table}




\section{Conclusion}

A new strategy for the model reduction of metriplectic systems has been proposed and shown to  guarantee a strong form of the first and second laws of thermodynamics.  By preserving the original metriplectic structure at the algebraic level, the proposed ROM is able to produce more realistic energy and entropy profiles than other POD-ROMs which cannot guarantee structure-preservation.  It has been shown that the metriplectic POD-ROM conserves energy to arbitrary precision regardless of the reduced dimension and converges to the true solution as this dimension increases.  As the proposed ROM is trained similarly to standard POD-ROMs, it could be useful as a drop-in replacement for metriplectic problems demanding physically realistic solutions which must remain stable and energy-conserving.   Future work will investigate applications to more complex problems such as those mentioned in Section~\ref{sec:rw}, as well as effective methods such as Discrete Empirical Interpolation for making the metriplectic ROM completely independent of the full-order dimension.  As several important problems in fluid mechanics are also known to have a metriplectic form (see e.g. \cite{ottinger2006}), it is especially interesting to consider the application of these techniques to realistic oceanic and atmospheric models which require strict adherence to physical laws.





\section*{Acknowledgments}
This work is partially supported by U.S. Department of Energy Scientific Discovery through Advanced Computing under grants DE-SC0020270 and DE-SC0020418.

\bibliographystyle{ieeetr}
\bibliography{biblio}

\begin{thebibliography}{10}

\bibitem{ottinger2006}
H.~C. \"Ottinger, ``Nonequilibrium thermodynamics for open systems,'' {\em
  Phys. Rev. E}, vol.~73, p.~036126, Mar 2006.

\bibitem{morrison1984}
P.~J. Morrison, ``Some observations regarding brackets and dissipation,'' {\em
  Center for Pure and Applied Mathematics Report PAM-228, University of
  California, Berkeley}, 1984.

\bibitem{grmela1997}
M.~Grmela and H.~C. {\"O}ttinger, ``Dynamics and thermodynamics of complex
  fluids. i. development of a general formalism,'' {\em Physical Review E},
  vol.~56, no.~6, p.~6620, 1997.

\bibitem{suzuki2020}
Y.~Suzuki, ``A {GENERIC} formalism for {K}orteweg-type fluids: I. a comparison
  with classical theory,'' {\em Fluid Dynamics Research}, vol.~52, no.~1,
  p.~015516, 2020.

\bibitem{wagner2001}
N.~J. Wagner, ``The {S}moluchowski equation for colloidal suspensions developed
  and analyzed through the {GENERIC} formalism,'' {\em Journal of
  non-{N}ewtonian fluid mechanics}, vol.~96, no.~1-2, pp.~177--201, 2001.

\bibitem{ait1999}
A.~Ait-Kadi, A.~Ramazani, M.~Grmela, and C.~Zhou, ``“volume preserving”
  rheological models for polymer melts and solutions using the {GENERIC}
  formalism,'' {\em Journal of Rheology}, vol.~43, no.~1, pp.~51--72, 1999.

\bibitem{materassi2012}
M.~Materassi and E.~Tassi, ``Metriplectic framework for dissipative
  magneto-hydrodynamics,'' {\em Physica D: Nonlinear Phenomena}, vol.~241,
  no.~6, pp.~729--734, 2012.

\bibitem{materassi2018}
M.~Materassi and P.~J. Morrison, ``Metriplectic torque for rotation control of
  a rigid body,'' {\em Cybernetics and Physics}, 2018.

\bibitem{caligan2016}
C.~Caligan and C.~Chandre, ``Conservative dissipation: How important is the
  {J}acobi identity in the dynamics?,'' {\em Chaos: An Interdisciplinary
  Journal of Nonlinear Science}, vol.~26, no.~5, p.~053101, 2016.

\bibitem{duong2013}
M.~H. Duong, M.~A. Peletier, and J.~Zimmer, ``{GENERIC} formalism of a
  {V}lasov--{F}okker--{P}lanck equation and connection to large-deviation
  principles,'' {\em Nonlinearity}, vol.~26, no.~11, p.~2951, 2013.

\bibitem{betsch2019}
P.~Betsch and M.~Schiebl, ``Energy-momentum-entropy consistent numerical
  methods for large-strain thermoelasticity relying on the {GENERIC}
  formalism,'' {\em International Journal for Numerical Methods in
  Engineering}, vol.~119, no.~12, pp.~1216--1244, 2019.

\bibitem{romero2010}
I.~Romero, ``Algorithms for coupled problems that preserve symmetries and the
  laws of thermodynamics: Part i: Monolithic integrators and their application
  to finite strain thermoelasticity,'' {\em Computer Methods in Applied
  Mechanics and Engineering}, vol.~199, no.~25-28, pp.~1841--1858, 2010.

\bibitem{lee2021}
K.~Lee, N.~Trask, and P.~Stinis, ``Machine learning structure preserving
  brackets for forecasting irreversible processes,'' {\em Advances in Neural
  Information Processing Systems}, vol.~34, 2021.

\bibitem{zhang2021}
Z.~Zhang, Y.~Shin, and G.~E. Karniadakis, ``{GFINNs}: {GENERIC} formalism
  informed neural networks for deterministic and stochastic dynamical
  systems,'' {\em arXiv preprint arXiv:2109.00092}, 2021.

\bibitem{peng2016}
L.~Peng and K.~Mohseni, ``Symplectic model reduction of {H}amiltonian
  systems,'' {\em SIAM Journal on Scientific Computing}, vol.~38, no.~1,
  pp.~A1--A27, 2016.

\bibitem{gong2017}
Y.~Gong, Q.~Wang, and Z.~Wang, ``Structure-preserving {G}alerkin {POD}
  reduced-order modeling of {H}amiltonian systems,'' {\em Computer Methods in
  Applied Mechanics and Engineering}, vol.~315, pp.~780--798, 2017.

\bibitem{afkham2017}
B.~M. Afkham and J.~S. Hesthaven, ``Structure preserving model reduction of
  parametric {H}amiltonian systems,'' {\em SIAM Journal on Scientific
  Computing}, vol.~39, no.~6, pp.~A2616--A2644, 2017.

\bibitem{maboudi2019}
B.~Maboudi~Afkham and J.~S. Hesthaven, ``Structure-preserving model-reduction
  of dissipative {H}amiltonian systems,'' {\em Journal of Scientific
  Computing}, vol.~81, no.~1, pp.~3--21, 2019.

\bibitem{sockwell2019}
K.~C. Sockwell, {\em Mass Conserving {H}amiltonian-Structure-Preserving Reduced
  Order Modeling for the Rotating Shallow Water Equations Discretized by a
  Mimetic Spatial Scheme}.
\newblock PhD thesis, The Florida State University, 2019.

\bibitem{polyuga2010}
R.~V. Polyuga and A.~Van~der Schaft, ``Structure preserving model reduction of
  port-{H}amiltonian systems by moment matching at infinity,'' {\em
  Automatica}, vol.~46, no.~4, pp.~665--672, 2010.

\bibitem{beattie2011}
C.~Beattie and S.~Gugercin, ``Structure-preserving model reduction for
  nonlinear port-{H}amiltonian systems,'' in {\em 2011 50th IEEE conference on
  decision and control and European control conference}, pp.~6564--6569, IEEE,
  2011.

\bibitem{gugercin2012}
S.~Gugercin, R.~V. Polyuga, C.~Beattie, and A.~Van Der~Schaft,
  ``Structure-preserving tangential interpolation for model reduction of
  port-{H}amiltonian systems,'' {\em Automatica}, vol.~48, no.~9,
  pp.~1963--1974, 2012.

\bibitem{chaturantabut2016}
S.~Chaturantabut, C.~Beattie, and S.~Gugercin, ``Structure-preserving model
  reduction for nonlinear port-{H}amiltonian systems,'' {\em SIAM Journal on
  Scientific Computing}, vol.~38, no.~5, pp.~B837--B865, 2016.

\bibitem{liljegren2020}
B.~Liljegren-Sailer, {\em On port-{H}amiltonian modeling and
  structure-preserving model reduction}.
\newblock PhD thesis, Universit{\"a}t Trier, 2020.

\bibitem{bai2005}
Z.~Bai and R.-C. Li, ``Structure-preserving model reduction using a krylov
  subspace projection formulation,'' {\em Communications in Mathematical
  Sciences}, vol.~3, no.~2, pp.~179--199, 2005.

\bibitem{lall2003}
S.~Lall, P.~Krysl, and J.~E. Marsden, ``Structure-preserving model reduction
  for mechanical systems,'' {\em Physica D: Nonlinear Phenomena}, vol.~184,
  no.~1-4, pp.~304--318, 2003.

\bibitem{beattie2009}
C.~Beattie and S.~Gugercin, ``Interpolatory projection methods for
  structure-preserving model reduction,'' {\em Systems \& Control Letters},
  vol.~58, no.~3, pp.~225--232, 2009.

\bibitem{egger2018}
H.~Egger, T.~Kugler, B.~Liljegren-Sailer, N.~Marheineke, and V.~Mehrmann, ``On
  structure-preserving model reduction for damped wave propagation in transport
  networks,'' {\em SIAM Journal on Scientific Computing}, vol.~40, no.~1,
  pp.~A331--A365, 2018.

\bibitem{liang2002}
Y.~Liang, H.~Lee, S.~Lim, W.~Lin, K.~Lee, and C.~Wu, ``Proper orthogonal
  decomposition and its applications—part i: Theory,'' {\em Journal of Sound
  and Vibration}, vol.~252, no.~3, pp.~527--544, 2002.

\bibitem{hestenes2012}
D.~Hestenes and G.~Sobczyk, {\em Clifford algebra to geometric calculus: a
  unified language for mathematics and physics}, vol.~5.
\newblock Springer Science \& Business Media, 2012.

\bibitem{tu2017}
L.~W. Tu, {\em Differential geometry: connections, curvature, and
  characteristic classes}, vol.~275.
\newblock Springer, 2017.

\bibitem{chaturantabut2010}
S.~Chaturantabut and D.~C. Sorensen, ``Nonlinear model reduction via discrete
  empirical interpolation,'' {\em SIAM Journal on Scientific Computing},
  vol.~32, no.~5, pp.~2737--2764, 2010.

\bibitem{scipy2020}
P.~Virtanen, R.~Gommers, T.~E. Oliphant, M.~Haberland, T.~Reddy, D.~Cournapeau,
  E.~Burovski, P.~Peterson, W.~Weckesser, J.~Bright, S.~J. {van der Walt},
  M.~Brett, J.~Wilson, K.~J. Millman, N.~Mayorov, A.~R.~J. Nelson, E.~Jones,
  R.~Kern, E.~Larson, C.~J. Carey, {\.I}.~Polat, Y.~Feng, E.~W. Moore,
  J.~{VanderPlas}, D.~Laxalde, J.~Perktold, R.~Cimrman, I.~Henriksen, E.~A.
  Quintero, C.~R. Harris, A.~M. Archibald, A.~H. Ribeiro, F.~Pedregosa, P.~{van
  Mulbregt}, and {SciPy 1.0 Contributors}, ``{{SciPy} 1.0: Fundamental
  Algorithms for Scientific Computing in Python},'' {\em Nature Methods},
  vol.~17, pp.~261--272, 2020.

\bibitem{hindmarsh1983}
A.~C. Hindmarsh, ``{ODEPACK}, a systematized collection of {ODE} solvers,''
  {\em Scientific computing}, pp.~55--64, 1983.

\bibitem{shang2020}
X.~Shang and H.~C. Öttinger, ``Structure-preserving integrators for
  dissipative systems based on reversible-irreversible splitting,'' {\em
  Proceedings of the Royal Society A: Mathematical, Physical and Engineering
  Sciences}, vol.~476, no.~2234, p.~20190446, 2020.

\bibitem{mielke2011}
A.~Mielke, ``Formulation of thermoelastic dissipative material behavior using
  {GENERIC},'' {\em Continuum Mechanics and Thermodynamics}, vol.~23, no.~3,
  pp.~233--256, 2011.

\bibitem{kraaij2020}
R.~C. Kraaij, A.~Lazarescu, C.~Maes, and M.~Peletier, ``Fluctuation symmetry
  leads to {GENERIC} equations with non-quadratic dissipation,'' {\em
  Stochastic Processes and their Applications}, vol.~130, no.~1, pp.~139--170,
  2020.

\end{thebibliography}

\end{document}